\newtheorem{theorem}{Theorem}
\newtheorem{lemma}{Lemma}
\newtheorem{proposition}{Proposition}
\renewcommand{\r}{\mathbb R}
\renewcommand{\l}{\mathbb L}
\newcommand{\Tr}{\text{tr}}
\newcommand{\R}{\mathbb{R}}
\newcommand{\abs}[1]{{\left\lvert#1\right\rvert}} %
\newcommand{\deq}{\mathrel{\mathop:} = } % Igaliti d'assignement
\newcommand{\eqd}{= \mathrel{\mathop:} } % Igaliti d'assignement
\newcommand{\paren}[1]{{\left( #1 \right)}} %
\newcommand{\norm}[2]{{\left\lVert #2 \right\rVert}_{#1}}
\newcommand{\Fourier}[1]{\widetilde{#1}}
\newcommand{\N}{\mathbb{N}}
\def \1{{\rm 1}\mskip -4,5mu{\rm l} }
\title{State estimation in quantum homodyne tomography with noisy data}
\author{J M Aubry$^1$, C Butucea$^2$ and K Meziani$^3$\\
$^1$ \footnotesize{Laboratoire d'Analyse et de Mathématiques Appliquées (UMR CNRS 8050),}\\
\footnotesize{Université Paris-Est, 94010 Créteil Cedex, France}\\
$^2$ \footnotesize{Laboratoire Paul Painlevé (UMR CNRS 8524),}\\
\footnotesize{Université des Sciences et Technologies de Lille 1, 59655 Villeneuve d'Ascq Cedex, France}\\
$^3$ \footnotesize{Laboratoire de Probabilités et Modèles Aléatoires,}\\
\footnotesize{Université Paris VII (Denis Diderot), 75251 Paris Cedex 05, France}\\
\footnotesize{\textbf{email:} jmaubry@math.cnrs.fr; cristina.butucea@math.univ-lille1.fr; meziani@math.jussieu.fr}}  
\begin{document}
\maketitle

%\author{Katia M\'{e}ziani \\
%email: meziani@math.jussieu.fr\\
%Laboratoire de Probabilit\'{e}s et Mod\`{e}les Al\'{e}atoires, \\
%Universit\'{e} Paris VII (Denis Diderot),\\
%case courier 7012, 2 Place Jussieu
%75251 Paris cedex 05, France.}

%%%%%%%%%%%%%%%%%%%%%%%%%%%%%%%%%%%%%%%%%%%%%%%%%%%%%%%%%%%%%%%%%%%%%%%%%%%%%%%%%%%%%%%%%%%%%%%%%%%%%%%%%%%%%%%%%%%%%%%%%%%%%%%%
\begin{abstract}
    In the framework of noisy quantum homodyne tomography with
    efficiency parameter $0 < \eta \leq 1$, we propose two estimators
    of a quantum state whose density matrix elements $\rho_{m,n}$
    decrease like $e^{-B(m+n)^{ r/ 2}}$, for fixed known $B>0$ and
    $0<r\leq 2$.  The first procedure estimates the matrix
    coefficients by a projection method on the pattern functions (that
    we introduce here for $0<\eta \leq 1/2$), the second procedure is
    a kernel estimator of the associated Wigner function.  We compute
    the convergence rates of these estimators, in $\mathbb{L}_2$ risk.
\end{abstract}
%%%%%%%%%%%%%%%%%%%%%%%%%%%%%%%%%%%%%%%%%%%%%%%%%%%%%%%%%%%%%%%%%%%%%%%%%%%%%%%%%%%%%%%%%%%%%%%%%%%%%%%%%%%%%%%%%%%%%%%%%%%%%%%%
\noindent\textbf{Keywords}: density matrix, Gaussian noise, $\mathbb{L}_2$-risk, nonparametric estimation, pattern functions,
quantum homodyne tomography, quantum state, Radon transform, Wigner function.\\
\noindent\textbf{AMS 2000 subject classifications:} {62G05, 62G20, 81V80}\\
%\submitto{\IP}

%%%%%%%%%%%%%%%%%%%%%%%%%
\section{Introduction}
%%%%%%%%%%%%%%%%%%%%%%%%%

\noindent Experiments in quantum optics consist in creating, manipulating and
measuring quantum states of light. The technique called quantum
homodyne tomography allows to retrieve partial, noisy information from
which the state is to be recovered: this is the subject of the present
chapter.

%%%%%%%%%%%%%%%%%%%%%%%%%%%
\subsection{Quantum states}
%%%%%%%%%%%%%%%%%%%%%%%%%%%

\noindent Mathematically, the main concepts of quantum mechanics are formulated
in the language of selfadjoint operators acting on Hilbert spaces.  To
every quantum system one can associate a complex Hilbert space
$\mathcal{H}$ whose vectors represent the wave functions of the
system. These vectors are identified to projection operators, or pure
states.  In general, a state is a mixture of pure states described by
a compact operator $\rho$ on $\mathcal{H}$ having the following
properties:
\begin{enumerate}
  \item Selfadjoint: $\rho=\rho^{*}$, where $\rho^{*}$ is the adjoint
    of $\rho$.
  \item Positive: $\rho\geq 0$, or equivalently $\langle\psi,
    \rho\psi\rangle\geq 0$ for all $\psi\in\mathcal{H}$.
  \item Trace one: $\Tr(\rho)=1$.
\end{enumerate}

\noindent When $\mathcal{H}$ is separable, endowed with a countable orthonormal
basis, the operator $\rho$ is identified to a \emph{density matrix}
$[\rho_{m,n}]_{m,n\in\N}$.

\noindent The positivity property implies that all the eigenvalues of $\rho$ are
nonegative and by the trace property, they sum up to one.  In the case
of the finite dimensional Hilbert space $\mathbb{C}^{d}$, the density
matrix is simply a positive semi-definite $d \times d$ matrix of trace
one.  Our setup from now on will be $\mathcal{H} = L^{2}(\R)$, in
which case we employ the orthonormal Fock basis made of the Hermite
functions
\begin{equation}
    \label{eq:2}
    h_m(x) \deq (2^m m! \sqrt\pi)^{-\frac12} H_m(x) e^{-\frac{x^2}{2}}
\end{equation}
where $H_m(x) \deq (-1)^m e^{x^2} \frac{d^m}{dx^m} e^{-x^2}$ is the
$m$-th Hermite polynomial.  Generalizations to higher dimensions are
straightforward.

\noindent To each state $\rho$ corresponds a \emph{Wigner distribution}
$W_{\rho}$, which is defined via its Fourier transform in the way
indicated by equation (\ref{def.Wigner}):
\begin{equation}
    \label{def.Wigner}
    \widetilde{W}_\rho(u,v) \deq \iint e^{-i(uq+vp)}W_\rho(q,p) dq dp
    \deq
    \mathrm{Tr}\big(\rho \exp(-iu\mathbf{Q}-iv\mathbf{P})\big)
\end{equation}
where $\mathbf{Q}$ and $\mathbf{P}$ are canonically conjugate
observables (e.g. electric and magnetic fields) satisfying the
commutation relation $[\mathbf{Q}, \mathbf{P}] = i$ (we assume a
choice of units such that $\hbar = 1$).  It is easily checked that
$W_{\rho}$ is real-valued, has integral $\iint_{\R^2} W_\rho(q,p)dqdp=1$ and uniform bound $\abs{W_{\rho}(q,p)} \leq
\frac{1}{\pi}$.

\noindent For any $\phi \in \R$, the Wigner distribution allows one to
easily recover the probability density $x \mapsto p_{\rho}(x,\phi)$ of
$\mathbf{Q} \cos \phi + \mathbf{P} \sin \phi$ by
\begin{equation}
    \label{eq:prhophi}
    p_{\rho}(x,\phi) =\mathcal{R}[W_{\rho}] (x,\phi),
\end{equation}
where $\mathcal{R}$ is the Radon transform defined in equation
(\ref{eq.Radon.transform})
\begin{equation}\label{eq.Radon.transform}
    \mathcal{R}[W_{\rho}](x,\phi)=\int_{-\infty}^\infty W_{\rho}(
    x\cos\phi - t\sin\phi, \, x \sin\phi + t\cos\phi )dt.
\end{equation}

\noindent Moreover, the correspondence between $\rho$ and $W_{\rho}$ is one to
one and isometric with respect to the $\mathbb{L}_{2}$ norms as in
equation (\ref{eq.l2.isometry}):
\begin{equation}
    \label{eq.l2.isometry}
    \| W_{\rho}\|_2^2 \deq \iint |W_{\rho}(q,p)
    |^2 dq dp = \frac{1}{2\pi}\| \rho \|_{2}^{2} \deq
    \frac{1}{2\pi}\sum_{j, k=0}^{\infty} |\rho_{jk}|^2.
\end{equation}

\noindent From now on we denote by $\langle \cdot, \cdot \rangle$ and $\| \cdot
\|$ the usual Euclidian scalar product and norm, while $C(\cdot)$ will
denote positive constants depending on parameters given in the
parentheses.

\noindent We suppose that the unknown state belongs to the class
$\mathcal{R}(B,r)$ for $B>0$ and $0 < r \leq 2$ defined by
\begin{equation}\label{eq.classcoeff}
    \mathcal{R}(B,r) \deq \{\rho {\rm \ quantum \ state} :
    |\rho_{m,n} |\leq \exp(-B (m+n)^{r/2})\}.
\end{equation}

\noindent For simplicity, we have chosen to express the results relative to a
class which is the intersection of the (positive) ball of radius 1 in
some Banach space with the hyperplane $\Tr(\rho) = 1$. Another radius
for the class would only change the constant $C$ in front of the
asymptotic rates of convergence that we will find.

\noindent As it will be made precise in Propositions~\ref{prop:popetown}
and~\ref{prop:popetown2}, quantum states in the class given in
(\ref{eq.classcoeff}) have fast decreasing and very smooth Wigner
functions.  From the physical point of view, the choice of such a class
of Wigner functions seems to be quite reasonable considering that
typical states $\rho$ prepared in the laboratory do satisfy this type
of condition. % Une référence corroborant cette affirmation ?

%%%%%%%%%%%%%%%%%%%%%%%%%%%%%%%%
\subsection{Statistical model}
%%%%%%%%%%%%%%%%%%%%%%%%%%%%%%%%

\noindent Let us describe the statistical model.  Consider $(X_{1}, \Phi_{1}),
\dots ,(X_{n}, \Phi_{n})$ independent identically distributed random
variables with values in $\mathbb{R}\times [0,\pi]$ and distribution
$P_{\rho}$ having density $p_{\rho}(x,\phi)$ (given by (\ref{eq:prhophi})
 with respect to $\frac{1}{\pi}\lambda$, $\lambda$
being the Lebesgue measure on $\mathbb{R}\times [0,\pi]$.  The aim is
to recover the density matrix $\rho$ and the Wigner function $W_\rho $
from the observations.

\noindent However, there is a slight complication.  What we observe are not the
variables $(X_{\ell}, \Phi_{\ell})$ but the noisy ones $(Y_{\ell},
\Phi_{\ell})$, where
\begin{equation}
    \label{noisy.data}
    Y_\ell:=\sqrt{\eta}X_\ell+\sqrt{(1-\eta)/2}~\xi_\ell,
\end{equation}
with $\xi_\ell$ a sequence of independent identically distributed
standard Gaussians which are independent of all $(X_j, \Phi_{j})$.
The detection efficiency parameter $0 < \eta \leq 1$ is known from the
calibration of the apparatus and we denote by $N^\eta$ the centered
Gaussian density of variance $(1-\eta)/2$, and $\widetilde N^\eta$ its
Fourier transform.  Then the density $p_{\rho}^{\eta}$ of $(Y_{\ell},
\Phi_{\ell})$ is given by the convolution of the density
$p_\rho(\cdot/\sqrt{\eta}, \phi)/\sqrt{\eta}$ with $N^\eta$
\begin{eqnarray*}
    p^\eta_\rho(y,\phi) &=& \int_{-\infty}^\infty
    \frac{1}{\sqrt{\eta}} p_\rho\left(\frac{y-x}{\sqrt{\eta}}, \phi
    \right) N^\eta (x)dx\\
    &~ \eqd& \left(\frac{1}{\sqrt\eta}
        p_\rho\left(\frac{\cdot}{\sqrt\eta},\phi\right) \ast N^\eta
    \right)(y).
\end{eqnarray*}
% where $p \ast q(y) = \int p(y-x) q(x) dx$ denotes the convolution of
% two arbitrary functions $p$ and $q$.
In the Fourier domain this relation becomes
\begin{eqnarray}
    \label{fourierproun}
    \mathcal{F}_1[p^\eta_\rho(\cdot,\phi)](t)
    &=& \mathcal{F}_1[p_\rho(\cdot,\phi)](t \sqrt{\eta})
    \widetilde{N}^\eta(t),
\end{eqnarray}
where $\mathcal{F}_{1} $ denotes the Fourier transform with respect to
the first variable.

\noindent The theoretical foundation of quantum homodyne tomography was outlined
in \cite{Vogel&Risken} and has inspired the first experiments
determining the quantum state of a light field, initially with optical
pulses in \cite{Smithey,Smitheybis,Leonhardt}. The reconstruction of
the density from averages of data has been discussed or studied in
\cite{DAriano.0,DAriano.2,DAriano.3,Artiles&Gill&Guta} for $\eta=1$
(no photon loss).  Max-likelihood methods have been studied in
\cite{BDPS,Artiles&Gill&Guta,DMS,Guta} and procedure using adaptive
tomographic kernels to minimize the variance has been proposed in
\cite{DP}. The estimation of the density matrix of a quantum state of
light in case of efficiency parameter $\frac12 < \eta \leq 1$ has been
discussed in \cite{DAriano.1,DMS,DAriano.5} and considered in
\cite{Richter} via the pattern functions for the diagonal elements.

%%%%%%%%%%%%%%%%%%%%%%%%%%%%%%%%%%%
\subsection{Outline of the results}
%%%%%%%%%%%%%%%%%%%%%%%%%%%%%%%%%%%

\noindent The goal of this chapter is to define estimators of both the density
matrix and the Wigner function and to compare their performance in
$\mathbb{L}_2$ risk. In order to compute estimation risks and to tune
the underlying parameters, we define a realistic class of quantum
states $\mathcal{R}(B,r)$, depending on parameters $B>0$ and $ 0<
r\leq 2$, in which the elements of the density matrix decrease
rapidly.

\noindent In Section \ref{sec.dec.reg}, we prove that the fast decay of the
elements of the density matrix implies both rapid decay of the Wigner
function and of its Fourier transform, allowing us to translate the
classes $\mathcal{R}(B,r)$ in terms of Wigner functions.

\noindent In Section \ref{sec.dens.mat}, we give estimators of the density
matrix $\rho$.  The legend was somehow forged that no estimation of
the matrix is possible when $0 < \eta \leq 1/2$. The physicists argue
that their machines actually have high detection efficiency, around
0.8; it is nevertheless satisfying to be able to solve this problem in
any noise condition.  We give here the so-called \emph{pattern functions} 
to use for estimating the density matrix in the noisy
case with \emph{any} value of $\eta$ between 0 and 1. These pattern
functions allow us to solve an inverse problem which becomes (severly)
ill-posed when $0 < \eta \leq 1/2$.  In this case, we regularize the
inverse problem and this introduces a smoothing parameter which we
will choose in an optimal way. We compute the upper bounds for the
rates achieved by our methods, with $\mathbb{L}_2$ risk measure.

\noindent In Section \ref{sec.Wigner}, we study a kernel estimator of the Wigner
function in $\mathbb{L}_2$ risk, over the same class of Wigner
functions. It is a truncated version of the estimator in
\cite{Butucea&Guta&Artiles} and tuned accordingly. We compute upper
bounds for the rates of convergence of this estimator in $\mathbb{L}_2$ risk.

\noindent To conclude, we may infer that the performances of both estimators are
comparable. We obtain nearly polynomial rates for the case $r = 2$ and
intermediate rates for $0 < r < 2$ (faster than any logarithm, but
slower than any polynomial).  It is convenient to have methods to
estimate directly both representations of a quantum state.  The
estimator of the matrix $\rho$ can be more easily projected on the
space of proper quantum states. On the other hand, we may capture some
features of the quantum states more easily on the Wigner function, for
instance when this function has significant negative parts, the fact
that the quantum state is non classical.

%%%%%%%%%%%%%%%%%%%%%%%%%%%%%%%%%%%%%%%%%%%%%%%%%%%%%%%%%%%%%%
\section{Decrease and smoothness of the Wigner distribution}
\label{sec.dec.reg}
%%%%%%%%%%%%%%%%%%%%%%%%%%%%%%%%%%%%%%%%%%%%%%%%%%%%%%%%%%%%%

\noindent We recall that the Wigner distribution $W_{\rho}$ was defined in the
introduction.  In the Fock basis, we can write $W_{\rho}$ in terms of
the density matrix $[\rho_{m,n}]$ as follows (see Leonhardt
\cite{Leonhardt} for the details).
\begin{equation}
    W_{\rho}(q,p) = \sum_{m,n} \rho_{m,n} W_{m,n}(q,p) \nonumber
\end{equation}
where
\begin{equation}
    \label{eq:1}
    W_{m,n}(q,p) = \frac1\pi \int e^{2ipx} h_m(q-x) h_n(q+x) dx.
\end{equation}
It can be seen that $W_{m,n}(q,p) = W_{n,m}(q,-p)$ and
if $m \geq n$,
\begin{eqnarray}
   W_{m,n}(q,p)&=&\frac{(-1)^m}{\pi} \paren{\frac{n!}{m!}}^\frac12
    e^{-\paren{q^2+p^2}} \nonumber\\
    && \times \paren{\sqrt2(ip-q)}^{m-n}
    L_n^{m-n}\paren{2q^2+2p^2}
    \label{eq:WmnLaguerre}
\end{eqnarray}
thus, writing $z \deq \sqrt{q^2+p^2} $,
\begin{equation}
    \label{eq:deflmn}
    l_{m,n}(z) \deq \abs{W_{m,n}(q,p)} = \frac{2^{\frac{m-n}2}}{\pi}
    \paren{\frac{n!}{m!}}^\frac12 e^{-z^2} z^{m-n}
    \abs{L_n^{m-n}(2z^2)}
\end{equation}
where $L_{n}^{\alpha}(x) \deq (n!)^{-1} e^x x^{-\alpha} \frac{d^n}{dx^n}
(e^{-x} x^{n+\alpha})$ is the Laguerre polynomial of degree $n$ and
order $\alpha$.  Concerning the Fourier transforms, we also recall
that
\begin{equation}
    \Fourier{W_{m,n}}(q,p) = \frac{(-i)^{m+n}}{2}
    {W_{m,n}}\paren{\frac{q}{2},\frac{p}{2}}.
    \label{eq:FourierWmn}
\end{equation}

\noindent In this section we show how a decrease condition on the coefficients
of the density matrix translates on the corresponding Wigner
distribution.  First the case $r < 2$:
\begin{proposition}
    \label{prop:popetown}
    Assume that $0 < r < 2$ and that there exists $B > 0$ such that,
    for all $m \geq n$,
    \begin{equation}
        \abs{\rho_{m,n}} \leq e^{-B(m+n)^{r/2}}.  \nonumber
    \end{equation}
    Then for all $\beta < B$, there exists $z_{0}$
    (depending explicitly on $r, B, \beta$, see proof)
    such that $z := \sqrt{q^2 + p^2} \geq z_{0}$ implies
    \begin{equation}
        \label{eq:dec}
        \abs{W_{\rho}(q,p)} \leq A(z) e^{- \beta z^r}
    \end{equation}
    as well as
    \begin{equation}
        \label{eq:reg}
        \abs{\Fourier{W_{\rho}}(q,p)} \leq A(z/2) e^{- \beta (z/2)^r}
    \end{equation}
    where $A(z) \deq \frac1\pi \paren{\sum\limits_{m,n} e^{-B(m+n)^{r/2}} +
      \frac{4}{B r} z^{4-r}}$.
    % $A$ also is explicited below.
\end{proposition}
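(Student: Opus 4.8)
The plan is to bound the Wigner function by its series expansion and control the growth of the individual terms $l_{m,n}(z)$ defined in (\ref{eq:deflmn}). Starting from $|W_\rho(q,p)| \leq \sum_{m,n} |\rho_{m,n}|\, l_{m,n}(z)$ and using the hypothesis $|\rho_{m,n}| \leq e^{-B(m+n)^{r/2}}$, the task reduces to understanding how large $l_{m,n}(z) = \frac{2^{(m-n)/2}}{\pi}(n!/m!)^{1/2} e^{-z^2} z^{m-n} |L_n^{m-n}(2z^2)|$ can be for fixed $z$. The key obstacle, and the heart of the proof, is a pointwise bound on the Laguerre polynomials $|L_n^{m-n}(2z^2)|$ that is strong enough to make the sum converge while extracting the desired Gaussian-in-$z^r$ decay.

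First I would split the double sum according to whether $m+n$ is small or large relative to $z$. For the terms with $m+n$ large, the factor $e^{-B(m+n)^{r/2}}$ must beat the growth of $l_{m,n}(z)$ in $m,n$; for the terms with $m+n$ small, one uses that $e^{-z^2} z^{m-n}$ together with the Laguerre factor decays rapidly in $z$ once $z$ is large. Concretely, I expect to use a uniform bound of the form $l_{m,n}(z) \leq C\, e^{-z^2} (\text{polynomial in } z \text{ and } m,n)$, or more likely a sharper estimate showing $l_{m,n}(z)$ peaks near $z^2 \approx m+n$ and decays like a Gaussian away from the peak. The threshold $z_0$ should be chosen so that for $z \geq z_0$ the crossover between the two regimes lands exactly where the combined bound gives $e^{-\beta z^r}$ with $\beta < B$; the gap $B - \beta$ is precisely the slack needed to absorb the polynomial and combinatorial factors. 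The explicit form of $A(z)$, with its constant term $\sum_{m,n} e^{-B(m+n)^{r/2}}$ (which is finite since $r>0$) plus a term growing like $z^{4-r}$, strongly suggests that one bounds the small-index terms by the full convergent sum and the large-index terms by a geometric/integral estimate producing the $\frac{4}{Br} z^{4-r}$ contribution.

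For the Fourier transform bound (\ref{eq:reg}), I would not redo the analysis but instead invoke the identity (\ref{eq:FourierWmn}), namely $\Fourier{W_{m,n}}(q,p) = \frac{(-i)^{m+n}}{2} W_{m,n}(q/2, p/2)$. Taking absolute values gives $|\Fourier{W_\rho}(q,p)| \leq \frac12 \sum_{m,n} |\rho_{m,n}|\, l_{m,n}(z/2)$, and since the prefactor $\frac12$ can be absorbed (or the argument $z/2$ already shrinks the relevant quantities), the same estimate yields $A(z/2) e^{-\beta(z/2)^r}$ by direct substitution $z \mapsto z/2$ into the decay bound just established. This step is essentially free once the first bound is in hand.

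The main obstacle will be the pointwise Laguerre estimate. Classical bounds on $|L_n^\alpha(x)|$ (for instance Szeg\H{o}-type estimates) are delicate because they depend on the regime of $x$ relative to $n$ and $\alpha$, and I must track the constants carefully enough to recover the explicit $z_0$ and the clean exponent $\beta z^r$ with the stated $A(z)$. I anticipate that the condition $r < 2$ (strict) is used precisely here: when $r = 2$ the decay $e^{-B(m+n)}$ matches the natural scale $e^{-z^2}$ of the Hermite/Laguerre functions, so the competition is borderline and requires a separate, more careful argument (which explains why the $r = 2$ case is deferred to Proposition~\ref{prop:popetown2}). Getting the split point and the slack $B - \beta$ to interact correctly so that the error terms assemble into exactly $A(z)$ is where the real bookkeeping lies.
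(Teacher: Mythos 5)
Your plan is correct and follows essentially the same route as the paper: the sum is split at a threshold $m+n\leq\phi(z)$ with $\phi(z)=(z-\sqrt{\beta}z^{r/2})^2-1\approx z^2$, the small-index terms are controlled by the off-peak Gaussian bound $l_{m,n}(z)\leq\frac1\pi e^{-(z-s)^2}$ for $z\geq s=\sqrt{m+n+1}$, the large-index tail is handled by the integral estimate of Lemma~\ref{lemm:magicjohnson} (yielding the $\frac{4}{Br}z^{4-r}$ term), and the Fourier bound follows from~(\ref{eq:FourierWmn}) exactly as you describe. The one ingredient you flag as the main obstacle --- a Laguerre estimate showing $l_{m,n}(z)$ peaks near $z\approx\sqrt{m+n}$ and decays like a Gaussian beyond --- is precisely Lemma~\ref{lemm:estimeLaguerre}, which the paper proves by a Sturm-type comparison of differential equations, so your outline is a faithful reconstruction of the argument.
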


\noindent If $r=2$, the result is a little different:
\begin{proposition}
    \label{prop:popetown2}
    Suppose that there exists $B > 0$ such that, for all $m \geq n$,
    \begin{equation}
        \abs{\rho_{m,n}} \leq e^{-B(m+n)}.  \nonumber
    \end{equation}
    Then there exists $z_{0}$ such that $z := \sqrt{q^2 + p^2} \geq
    z_{0}$ implies
    \begin{equation}
        \label{eq:dec2}
        \abs{W_{\rho}(q,p)} \leq A(z) e^{- \frac{B}{(1+\sqrt{B})^2}
          z^2}
    \end{equation}
    as well as
    \begin{equation}
        \label{eq:reg2}
        \abs{\Fourier{W_{\rho}}(q,p)} \leq A(z/2) e^{-
          \frac{B}{(1+\sqrt{B})^2} (z/2)^2}
    \end{equation}
    for $A(z) = \frac1\pi \paren{\sum\limits_{m,n} e^{-B(m+n)} + \frac{2
        e^{B}}{B(1+\sqrt{B})^2} z^{2}}$.
\end{proposition}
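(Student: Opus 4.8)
The plan is to bound $\abs{W_\rho}$ by a weighted sum of the elementary quantities $l_{m,n}$ from (\ref{eq:deflmn}) and then to split this sum according to the size of $m+n$ relative to $z^2$. Writing $W_\rho=\sum_{m,n}\rho_{m,n}W_{m,n}$ and using $\abs{\rho_{m,n}}\leq e^{-B(m+n)}$ together with the triangle inequality gives $\abs{W_\rho(q,p)}\leq\sum_{m,n}e^{-B(m+n)}l_{m,n}(z)$. Since $l_{m,n}=l_{n,m}$ (this follows from $W_{m,n}(q,p)=W_{n,m}(q,-p)$), it suffices to control $2\sum_{m\geq n}e^{-B(m+n)}l_{m,n}(z)$, so I may assume $m\geq n$ throughout and use the Laguerre expression (\ref{eq:deflmn}).

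First I would record two complementary bounds on $l_{m,n}(z)$. The crude one, $l_{m,n}(z)\leq\frac1\pi$, comes directly from the integral (\ref{eq:1}) by Cauchy--Schwarz and the $\mathbb{L}_2$-normalisation of the Hermite functions $h_m$; it carries no decay in $z$ and will be used only where $m+n$ is large. The decaying one exploits that, beyond its largest zero (located near $(\sqrt m+\sqrt n)^2$), the Laguerre polynomial $L_n^{m-n}$ keeps a constant sign and satisfies $\abs{L_n^{m-n}(2z^2)}\leq (2z^2)^n/n!$; inserting this into (\ref{eq:deflmn}) collapses the prefactors and yields a coherent-state type estimate $l_{m,n}(z)\leq\frac1\pi e^{-z^2}(z\sqrt2)^{m+n}/\sqrt{m!\,n!}$, valid precisely when $2z^2\geq(\sqrt m+\sqrt n)^2$. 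This is the bound that retains the Gaussian factor $e^{-z^2}$.

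I would then fix a threshold of the form $\sqrt m+\sqrt n\leq\kappa z$ and split the sum: on this region I use the decaying estimate, off it I use $l_{m,n}\leq\frac1\pi$. Off the region $m+n$ is at least of order $z^2$, so $\sum e^{-B(m+n)}$ is exponentially small in $z^2$, and bounding the summands there by $\frac1\pi$ produces both a decay factor and the constant $\frac1\pi\sum_{m,n}e^{-B(m+n)}$ appearing in $A$; on the region, summing the coherent-state bound against $e^{-B(m+n)}$ gives, after Stirling, an exponent that is maximised on the boundary $\sqrt m+\sqrt n=\kappa z$. Optimising $\kappa$ so that the forbidden-region decay and the boundary growth balance is what fixes the rate, and the semiclassical matching of the energy scale $\sqrt{m+n}$ against the radius $z$ at the turning point is exactly what makes $\sqrt B$ (hence $(1+\sqrt B)^2$) appear; the residual polynomial factor, coming from the width of the summation and the geometric tails, assembles into the $z^2$ term of $A(z)$. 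I expect this optimisation---pinning the \emph{exact} constant $\tfrac{B}{(1+\sqrt B)^2}$ and the companion coefficient $\tfrac{2e^{B}}{B(1+\sqrt B)^2}$ rather than merely some $c<1$---to be the main obstacle, since it requires the sharp location of the largest Laguerre zero and a careful treatment of the transition region where neither bound is tight.

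Finally, (\ref{eq:reg2}) would follow from (\ref{eq:dec2}) with almost no extra work: the scaling identity (\ref{eq:FourierWmn}) gives $\abs{\Fourier{W_\rho}(q,p)}\leq\frac12\sum_{m,n}\abs{\rho_{m,n}}\,l_{m,n}(z/2)$, so applying the estimate already obtained for $W_\rho$ with $z$ replaced by $z/2$ yields $\abs{\Fourier{W_\rho}(q,p)}\leq\frac12 A(z/2)e^{-\frac{B}{(1+\sqrt B)^2}(z/2)^2}$, which is stronger than the claimed bound. The same argument proves (\ref{eq:dec}) and (\ref{eq:reg}) in Proposition~\ref{prop:popetown}, the only change being that for $r<2$ the sub-Gaussian weight $e^{-B(m+n)^{r/2}}$ leaves slack, allowing any $\beta<B$ and the simpler coefficient $\tfrac{4}{Br}$.
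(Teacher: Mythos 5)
Your skeleton is the same as the paper's: expand $\abs{W_\rho}\leq\sum_{m,n}\abs{\rho_{m,n}}l_{m,n}(z)$, split at a threshold $m+n\approx\theta z^2$, use $l_{m,n}\leq 1/\pi$ together with a tail estimate on $\sum_{m+n\geq\theta z^2}e^{-B(m+n)}$ above the threshold (this is Lemma~\ref{lemm:magicjohnson}, and it produces the $z^2$ term of $A$ exactly as you describe), use a $z$-decaying bound on $l_{m,n}$ below it, and deduce (\ref{eq:reg2}) from (\ref{eq:dec2}) via (\ref{eq:FourierWmn}). The gap is precisely the step you flag as ``the main obstacle'': the on-region estimate you propose does not close. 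The inequality $\abs{L_n^{m-n}(2z^2)}\leq(2z^2)^n/n!$ beyond the largest zero is correct (factor the polynomial over its zeros), but the resulting $l_{m,n}(z)\leq\frac1\pi e^{-z^2}(z\sqrt2)^{m+n}/\sqrt{m!\,n!}$ is exponentially lossy near the turning point, which is exactly where the constant is decided. Concretely, for $m=n$ with $2n\approx\theta z^2$, Stirling gives $e^{-z^2}(2z^2)^n/n!\approx\exp\bigl(z^2(-1+\tfrac{\theta}{2}\log\tfrac{4e}{\theta})\bigr)$; for $\theta$ close to $1$ (i.e.\ $B$ small) the exponent is about $(\log 2-\tfrac12)z^2>0$, so your majorant of $l_{n,n}(z)$ \emph{grows} like $e^{0.19\,z^2}$ where the true value is $\leq 1/\pi$. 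Hence the optimisation over $\kappa$ cannot deliver the constant $B/(1+\sqrt B)^2$ uniformly in $B$; for small $B$ it delivers no decay at all.

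The paper's substitute for your coherent-state bound is Lemma~\ref{lemm:estimeLaguerre}: $l_{m,n}(z)\leq\frac1\pi e^{-(z-s)^2}$ for $z\geq s=\sqrt{m+n+1}$, proved by a Sturm-type comparison (Lemma~\ref{lemm:ineqdiff}) of the ODE satisfied by $\sqrt z\,l_{m,n}(z)$ with that of a shifted Gaussian. This bound is tight at the turning point (it equals $1/\pi$ at $z=s$) and, evaluated at the threshold $s=\sqrt\theta z$ with $\theta$ defined by $(1-\sqrt\theta)^2=B\theta$, i.e.\ $\theta=(1+\sqrt B)^{-2}$, gives exactly $e^{-B\theta z^2}$ --- no Stirling, no optimisation, and the constant $B/(1+\sqrt B)^2$ falls out of the defining equation for $\theta$. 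To salvage your route you would need an estimate of comparable strength in the transition region $m+n\asymp z^2$; away from it (say $m+n\leq\epsilon z^2$ for small $\epsilon$) your bound is serviceable, but that only covers the large-$B$ regime. Your remaining steps (the reduction by symmetry to $m\geq n$, and the factor-$\tfrac12$ improvement in the Fourier bound) are correct and match the paper.
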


\noindent Note that $\frac{B}{\paren{1+\sqrt{B}}^2} < \min(B,1)$.  Even when $B$
is very large, we cannot hope to obtain a faster decrease because
$e^{-z^2}$ is the decrease rate of the basis functions themselves
(Lemma \ref{lemm:estimeLaguerre}). 

\noindent The proof of these propositions is defered to
Appendix~\ref{sec.proofs}.  More general results and converses are
studied in~\cite{Aubry:2007fk}. Let us now state a few general utility
lemmata.

\begin{lemma}
    \label{lemm:ineqdiff}
    Let $y$ and $w$ be two $C^2$ functions: $[x_0,+\infty) \to
    (0,+\infty)$ such that $y'(x) \to 0$, $w$ is bounded, satisfying
    the differential equations
    \begin{eqnarray*}
        y''(x) &=& \phi(x) y(x) \\
        w''(x) &=&\psi(x) w(x),
    \end{eqnarray*}
    with continuous $\phi(x) \leq \psi(x)$, and initial conditions
    $y(x_0) = w(x_0)$.  Then for all $x \geq x_0$, $w(x) \leq y(x)$.
\end{lemma}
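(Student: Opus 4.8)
The plan is to reduce the claimed inequality $w \le y$ to the monotonicity of the quotient $f \deq w/y$, which is well defined and positive since $y>0$ on $[x_0,+\infty)$. Introduce the Wronskian-type quantity $V \deq wy' - yw'$. A direct differentiation, using the two equations $y''=\phi y$ and $w''=\psi w$, gives
\begin{equation*}
V' = wy'' - yw'' = (\phi - \psi)\,wy \le 0,
\end{equation*}
since $\phi \le \psi$ and $w,y>0$; thus $V$ is non-increasing on $[x_0,+\infty)$. On the other hand,
\begin{equation*}
f' = \frac{w'y - wy'}{y^2} = -\frac{V}{y^2}.
\end{equation*}
Because $f(x_0) = w(x_0)/y(x_0) = 1$, it suffices to prove that $V \ge 0$ throughout: then $f' \le 0$, so $f$ is non-increasing and $f \le f(x_0) = 1$, i.e.\ $w \le y$.

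Since $V$ is non-increasing, to obtain $V \ge 0$ on all of $[x_0,+\infty)$ it is enough to show that its limit at infinity is nonnegative, $\lim_{x\to\infty} V \ge 0$ (this limit exists in $[-\infty, V(x_0)]$ by monotonicity). I would split $V = wy' - yw'$ and treat the two boundary terms separately. The first term is immediate: $w$ is bounded and $y'(x)\to 0$, so $wy' \to 0$. The whole difficulty is therefore concentrated in controlling the second term $yw'$ as $x \to \infty$.

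For this boundary term I would exploit the sign of the potentials (positive in the regime where the lemma is applied), which forces convexity of both solutions. From $w''=\psi w \ge 0$ together with $w$ bounded one deduces that $w$ is non-increasing with $w'\le 0$ and $w'(x)\to 0$; from $y''=\phi y \ge 0$ together with $y'(x)\to 0$ one deduces that $y$ is non-increasing, hence bounded by $y(x_0)$. Consequently $yw' \to 0$ (a bounded factor times a factor tending to $0$), so $\lim_{x\to\infty} V = 0 \ge 0$, and the reduction above closes the argument. The main obstacle is precisely this last step: pinning down the boundary behaviour of the Wronskian at infinity is exactly where the two hypotheses ``$y'\to 0$'' and ``$w$ bounded'' are consumed, each killing one of the boundary terms. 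If one prefers not to invoke convexity explicitly, it already suffices to know that $w'\le 0$ eventually, which gives $-yw'\ge 0$ and hence $\liminf_{x\to\infty} V \ge 0$; combined with the monotonicity of $V$ this still yields $V \ge 0$ and the same conclusion.
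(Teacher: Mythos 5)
Your reduction is sound up to and including the first boundary term: the Wronskian $V = wy' - yw'$ is indeed non-increasing because $V' = (\phi-\psi)wy \le 0$, the quotient $f = w/y$ satisfies $f' = -V/y^2$ with $f(x_0)=1$, it does suffice to prove $\liminf_{x\to\infty} V \ge 0$, and $wy' \to 0$ follows correctly from the two stated hypotheses. The gap is in the second boundary term $yw'$. To get $yw' \to 0$ (or even just $w' \le 0$ eventually) you invoke $\phi \ge 0$ and $\psi \ge 0$, which are not hypotheses of the lemma; worse, they fail exactly where the lemma is used in Lemma~\ref{lemm:estimeLaguerre}: there $\phi(z) = 4(z-s)^2-2$ equals $-2$ at the left endpoint $z=s$ and is negative on $[s, s+1/\sqrt{2})$, and $\psi(z) = 4(z^2-s^2) + (\alpha^2-\tfrac14)z^{-2}$ is negative at $z=s$ whenever $\alpha = m-n = 0$. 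A bounded positive solution of $w'' = \psi w$ with sign-changing $\psi$ need not be eventually monotone, so the fallback ``it suffices that $w'\le 0$ eventually'' does not rescue the step: nothing in the hypotheses delivers that sign.

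The Wronskian framework can nevertheless be closed with no sign condition on the potentials, and the repair brings you back to the paper's mechanism. Suppose $w(x_1) > y(x_1)$. Since $f(x_0)=1 < f(x_1)$, the mean value theorem gives $x_2 \in (x_0,x_1)$ with $f'(x_2)>0$, i.e. $V(x_2) = -c < 0$; monotonicity of $V$ then gives $V \le -c$ on $[x_2,\infty)$, so $f$ is increasing there and $0 < y < w \le M := \sup w$ for all $x \ge x_1$. Writing $yw' = wy' - V \ge c - \abs{wy'} \ge c/2$ for $x$ large (because $wy'\to 0$), one gets $w' \ge c/(2M)$ eventually, so $w$ is unbounded — a contradiction. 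This is the same contradiction the paper reaches (a derivative bounded below against the boundedness of $w$), obtained through the quotient $w/y$ instead of the difference $w-y$; once repaired, your route has the merit of isolating the hypothesis $\phi \le \psi$ in the single clean inequality $V' \le 0$. As written, however, the proof establishes the lemma only under an extra positivity assumption that the application does not satisfy.
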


\begin{proof}
    Suppose that there exists $x_1 \geq x_0$ where $w(x_1) > y(x_1)$.
    Then for some $x_2 \in[x_0,x_1]$ we have $w'(x_2) > y'(x_2)$ and
    $w(x_2) \geq y(x_2)$.  Consequently, for all $x \geq x_2$, $w''(x)
    - y''(x) \geq 0$, and $w'(x)-y'(x) \geq w'(x_2) - y'(x_2)$.  When
    $x\to\infty$, $\liminf w'(x) \geq w'(x_2) - y'(x_2) > 0$, which
    contradicts the boundedness of $w$.
\end{proof}

\noindent This lemma is used to prove a bound on the Laguerre functions.

\begin{lemma}
    \label{lemm:estimeLaguerre}
    For all $m, n \in \N$ and $s \deq \sqrt{m+n+1}$, for all $z \geq 0$,
    \begin{equation}
     l_{m,n}(z) \leq \frac1\pi\left\{ \begin{array}{ll}
                                 1 &{\rm \ if \ } 0 \leq z \leq s \\
                                 e^{-(z-s)^2} &{\rm \ if \ } z \geq s.
                                       \end{array} 
                               \right. 
	    \label{eq:estimeLaguerre}
     \end{equation}
\end{lemma}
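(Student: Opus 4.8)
The plan is to treat the two regimes of \eqref{eq:estimeLaguerre} separately. For $0 \le z \le s$ the bound $l_{m,n}(z) \le 1/\pi$ is immediate: applying Cauchy--Schwarz to the integral representation \eqref{eq:1} and using that the Hermite functions are $\mathbb{L}_2$-normalized gives $\abs{W_{m,n}(q,p)} \le \frac1\pi \|h_m\|_2\,\|h_n\|_2 = \frac1\pi$ for every $(q,p)$, hence $l_{m,n} \le 1/\pi$ everywhere. All the substance is in the tail $z \ge s$.

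My plan for the tail is to realize $l_{m,n}$ as a multiple of the modulus of a solution of a one-dimensional Schr\"odinger equation and then invoke Lemma~\ref{lemm:ineqdiff}. Writing $\alpha = m-n \ge 0$ and starting from \eqref{eq:deflmn}, a direct computation based on the Laguerre differential equation shows that
\[
U(z) \deq z^{1/2} e^{-z^2} z^{\alpha} L_n^{\alpha}(2z^2), \qquad l_{m,n}(z) = c\, z^{-1/2} \abs{U(z)}
\]
(for an explicit constant $c>0$) satisfies the first-derivative-free equation $U'' = \Phi\, U$ with
\[
\Phi(z) = 4z^2 + \frac{\alpha^2 - 1/4}{z^2} - 4s^2, \qquad s^2 = m+n+1 ;
\]
the value $s^2 = 2n+\alpha+1$ enters here as the energy of the associated two-dimensional harmonic oscillator. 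The natural comparison is the Gaussian $G(z) = e^{-(z-s)^2}$, which solves $G'' = \psi\, G$ with $\psi(z) = 4(z-s)^2 - 2$, and for $z \ge s$ one checks
\[
\Phi(z) - \psi(z) = \frac{\alpha^2 - 1/4}{z^2} + 8s(z-s) + 2 \ge 2 - \frac{1}{4s^2} \ge \tfrac{7}{4} > 0 ,
\]
using $\alpha \ge 0$ and $s \ge 1$, so the Laguerre potential $\Phi$ dominates $\psi$ on $[s,\infty)$.

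To apply Lemma~\ref{lemm:ineqdiff} I need $U$ to have constant sign on $[s,\infty)$, so that $\abs{U}$ is $C^2$ and strictly positive there; equivalently, $L_n^{\alpha}$ must have no zero in $[2s^2,\infty)$. This is the step I expect to be the main obstacle, because the crude ``largest zero $\le$ classical turning point'' estimate places the largest zero at $\approx 2s^2 + O(s^{-2})$ when $\alpha = 0$, which is just barely too weak. I will instead bound the zeros by the eigenvalues of the Jacobi matrix: the zeros of $L_n^{\alpha}$ are exactly the eigenvalues of the $n\times n$ symmetric tridiagonal matrix $J_n$ with diagonal entries $d_k = 2k+\alpha+1$ and off-diagonal entries $e_k = \sqrt{k(k+\alpha)}$. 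A row-sum (Gershgorin) bound together with the elementary $e_k \le k + \alpha/2$ gives
\[
\lambda_{\max}(J_n) \le \max_{0 \le k \le n-1} \paren{d_k + e_k + e_{k+1}} \le 4(n-1) + 2\alpha + 2 = 2s^2 - 4 < 2s^2 ,
\]
so every zero of $L_n^{\alpha}$ is strictly below $2s^2$; in particular $U$ does not vanish on $[s,\infty)$ and $U(s) \ne 0$.

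It then remains to run the comparison. Set $\tilde U \deq \abs{U}$, which on $[s,\infty)$ equals $\pm U$, is $C^2$, strictly positive, and tends to $0$ (hence is bounded). Applying Lemma~\ref{lemm:ineqdiff} on $[s,\infty)$ with $w = \tilde U$ (potential $\Phi$) and $y = \tilde U(s)\, e^{-(z-s)^2}$ (potential $\psi$, matching $\tilde U$ at $z=s$ and with $y' \to 0$) yields $\tilde U(z) \le \tilde U(s)\, e^{-(z-s)^2}$ for all $z \ge s$. Since $z \mapsto z^{-1/2}$ is decreasing and $z \ge s$, multiplying by $c\, z^{-1/2} \le c\, s^{-1/2}$ gives
\[
l_{m,n}(z) = c\, z^{-1/2}\tilde U(z) \le c\, s^{-1/2}\tilde U(s)\, e^{-(z-s)^2} = l_{m,n}(s)\, e^{-(z-s)^2} \le \frac1\pi\, e^{-(z-s)^2} ,
\]
where the last inequality is the case $z=s$ of the bound already established for $0 \le z \le s$. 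This settles the tail and completes the proof.
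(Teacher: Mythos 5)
Your proof is correct and follows essentially the same route as the paper: Cauchy--Schwarz on the integral representation for $z \le s$, and for $z \ge s$ the Sturm-type comparison of $\sqrt{z}\,l_{m,n}(z)$ (satisfying the radial oscillator equation with potential $4(z^2-s^2)+\frac{\alpha^2-1/4}{z^2}$) against the Gaussian $e^{-(z-s)^2}$ via Lemma~\ref{lemm:ineqdiff}. The only addition is your Gershgorin bound on the Jacobi matrix showing that $L_n^{\alpha}$ has no zero in $[2s^2,\infty)$, a fact the paper asserts without proof; that justification is correct and welcome, but it does not change the argument's structure.
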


\begin{proof}
    When $z \leq s$, the result follows from the uniform bound on
    Wigner functions obtained by applying the Cauchy-Schwarz
    inequality to~(\ref{eq:1}).

    When $z \geq s$, $L_{n}^\alpha(2z^2)$ doesn't vanish and keeps the
    same sign as $L_{n}^\alpha(2s^2)$.  Now, as it can be seen
    from~\cite[5.1.2]{Szego:1959uq}, the function $w(z) \deq
    \sqrt{z}{l_{m,n}(z)}$ satisfies the differential equation
    $w''=(4(z^2-s^2)+\frac{\alpha^2-1/4}{z^2})z$.  On the other hand,
    $y(z) \deq \sqrt{s} {l_{m,n}(s)} e^{-(z-s)^2}$ satisfies $y'' =
    (4(z-s)^2-2) y$.  When $z \geq s$,
    \begin{equation}
        4(z-s)^2-2 < 4(z^2-s^2) + \frac{\alpha^2-1/4}{z^2}
    \end{equation}
    from which we conclude with Lemma~\ref{lemm:ineqdiff} that $w(z)
    \leq y(z)$.
\end{proof}

\noindent Finally, a lemma to bound the tail of a series.

\begin{lemma}
    \label{lemm:magicjohnson}
    If $\nu > 0$ and $C > 0$, there exists a $z_{0}$ such that $z \geq
    z_{0}$ implies
    \begin{equation}
        \sum_{m+n \geq z} e^{-C (m+n)^{\nu}} \leq \frac{2 }{C \nu }
        z^{2-\nu} e^{-C z^{\nu}}.
    \end{equation}
\end{lemma}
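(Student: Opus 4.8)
The plan is to bound the tail sum by comparing it to an integral. First I would reorganize the double sum over $(m,n)$ with $m + n \geq z$ into a single sum over $k := m + n$. Since there are exactly $k+1$ pairs $(m,n) \in \N^2$ with $m+n = k$, we have
\begin{equation*}
    \sum_{m+n \geq z} e^{-C(m+n)^\nu} = \sum_{k \geq z} (k+1) e^{-C k^\nu}.
\end{equation*}
So the real content is to estimate $\sum_{k \geq z}(k+1)e^{-Ck^\nu}$, reducing a two-dimensional tail to a one-dimensional weighted tail.

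Next I would dominate the sum by an integral. The function $k \mapsto (k+1)e^{-Ck^\nu}$ is eventually decreasing (for $k$ past some threshold depending on $C,\nu$, since the exponential eventually beats the linear factor), so for $z$ large enough the sum is bounded by $\int_{z-1}^{\infty}(t+1)e^{-Ct^\nu}\,dt$, or more cleanly by something like $2\int_z^\infty t\, e^{-Ct^\nu}\,dt$ after absorbing the $+1$ and the shift into constants for large $z$. The target bound has the shape $\frac{2}{C\nu}z^{2-\nu}e^{-Cz^\nu}$, which is exactly what one gets from the leading term of the integral $\int_z^\infty t\, e^{-Ct^\nu}\,dt$.

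The cleanest way to nail the integral is integration by parts adapted to the exponent. Observe that $\frac{d}{dt}e^{-Ct^\nu} = -C\nu\, t^{\nu-1}e^{-Ct^\nu}$, so $t\,e^{-Ct^\nu} = -\frac{1}{C\nu}\, t^{2-\nu}\frac{d}{dt}e^{-Ct^\nu}$. Integrating by parts gives a boundary term $\frac{1}{C\nu}z^{2-\nu}e^{-Cz^\nu}$ plus a remainder integral involving $t^{1-\nu}e^{-Ct^\nu}$, which is of strictly smaller order than the boundary term as $z \to \infty$ (it carries an extra factor decaying like $t^{-\nu}$ relative to the integrand). Thus for $z$ sufficiently large the remainder is absorbed into the factor of $2$ in front, yielding the stated bound $\frac{2}{C\nu}z^{2-\nu}e^{-Cz^\nu}$.

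The main obstacle, such as it is, lies in making the asymptotic comparisons uniform so that a single threshold $z_0$ works: one must check that the integral comparison (sum $\leq$ integral), the monotonicity of the summand, and the absorption of both the lower-order integration-by-parts remainder and the discrepancy between $\sum(k+1)$ and the pure integral all hold simultaneously beyond one explicit $z_0$ depending only on $C$ and $\nu$. Since the statement only asserts the existence of such a $z_0$ (not an explicit value), this is purely a matter of bookkeeping: each correction term is lower order, so picking $z_0$ large enough to make their combined contribution fit inside the slack given by the constant $2$ in the numerator finishes the argument.
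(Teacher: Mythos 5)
Your argument is correct and is essentially the paper's own proof: the paper likewise rewrites the double sum as $\sum_{t\ge z}(t+1)e^{-Ct^\nu}$, dominates it by $\int_z^\infty(t+1)e^{-Ct^\nu}\,dt$, and then bounds $(t+1)$ above by $\frac{2}{C\nu}\bigl(C\nu t-(2-\nu)t^{1-\nu}\bigr)$ for $t\ge z$ large, which turns the integrand into the exact derivative of $-\frac{2}{C\nu}t^{2-\nu}e^{-Ct^\nu}$ --- precisely your integration by parts with the lower-order remainder absorbed into the slack of the factor $2$. There is no substantive difference in route or in the bookkeeping left implicit.
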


\begin{proof}
    First notice that
    \begin{equation}
        \sum_{m+n \geq z} e^{-C (m+n)^{\nu}} = \sum_{t \geq z} (t+1)
        e^{- C t^{\nu}} \leq \int_{z}^\infty (t+1) e^{-C t^{\nu}} dt.
        \nonumber
    \end{equation}
    When $t \geq z$ and $z$ is large enough,
    we have
    \begin{eqnarray*}
        \int_{z}^\infty (t+1) e^{-C t^{\nu}} dt &\leq& \frac{2}{C\nu}
        \int_{z}^{\infty} \paren{ C \nu t - (2-\nu) t^{1-\nu} } e^{-C
          t^{\nu}} dt \\
        &\leq &\frac{2}{C\nu} z^{2-\nu} e^{-Cz^\nu}
    \end{eqnarray*}
    which is what we needed to prove.
\end{proof}

%%\textbf{In the following replace $K$ by $\frac1\pi$.}

%%%%%%%%%%%%%%%%%%%%%%%%%%%%%%%%%%%%
\section{Density matrix estimation}
\label{sec.dens.mat}
%%%%%%%%%%%%%%%%%%%%%%%%%%%%%%%%%%%%

\noindent The aim of this part is to estimate the density matrix $\rho$ in the
Fock basis directely from the data $(Y_i,\Phi_i)_{i=1,\ldots,n}$. We
show that for $0 < \eta \leq 1/2$ it is still possible to estimate the
density matrix with an error of estimation tending to $0$ as $n$ tends
to infinity (Theorem~\ref{theo:bcpdebruit}). In both cases ($\eta > \frac12$
and $\eta \leq \frac12$), we construct an estimator of the density
matrix $(\rho_{j,k})_{j,k\leq N-1}$ from a sample of QHT
data. We give theoretical results for our estimator when the quantum
state $\rho$ is in the class of density matrix with decreasing
elements defined in (\ref{eq.classcoeff}).

%%%%%%%%%%%%%%%%%%%%%%%%%%%%%%%%%%%%%%%%%%%%%%%%%%%%
\subsection{Pattern functions}
%%%%%%%%%%%%%%%%%%%%%%%%%%%%%%%%%%%%%%%%%%%%%%%%%%%%%

\noindent The matrix elements $\rho_{j,k}$ of the state $\rho$ in the Fock
basis~(\ref{eq:2}) can be expressed as kernel integrals: for all $j,k
\in \N$,
\begin{eqnarray}
    \label{rhojk}
    \rho_{j,k}=\frac 1\pi\int \int_0^\pi
    p_\rho(x,\phi)f_{j,k}(x)e^{-i(k-j)\phi}d\phi dx
\end{eqnarray}
where $f_{j,k} = f_{k,j}$ are bounded real functions called
\textit{pattern functions} in quantum homodyne literature. A concrete
expression for their Fourier transform using Laguerre polynomials was
found in \cite{Richter1}: for $j \geq k$,
% and $1>\eta>1/2$
\begin{eqnarray}
    \tilde{f}_{k,j}(t) &=& 2 \pi^2 \abs{t} \widetilde{W_{j,k}}(t,0)
    \nonumber
    \\   \label{pattern}
    &=&\pi(-i)^{j-k}\sqrt{\frac{2^{k-j} k!}{j!}}|t|
    t^{j-k}e^{-\frac{t^2}{4}}L^{j-k}_k(\frac{t^2}{2}).
\end{eqnarray}
where $\tilde{f}_{k,j}$ denotes the Fourier transform of the Pattern function $f_{k,j}$.

\noindent Let us state the lemmata which are used to prove upper bounds in Propositions~\ref{prop:3},~\ref{prop:4} and~\ref{prop:5}.

\begin{lemma}
    \label{lm:4}
    There exist constants $C_2$, $C_\infty$ such that
    \begin{center}
        $\sum_{j+k=0}^{N} \norm{2}{f_{k,j}}^2 \leq C_2 N^{\frac{17}{6}}$ and $\quad\sum_{j + k = 0}^{N} \norm{\infty}{f_{k,j}}^2
        \leq C_\infty N^{\frac{10}{3}}$.
    \end{center}
\end{lemma}
\noindent This is a slight improvement over~\cite[Lemma 1]{Artiles&Gill&Guta}.

\begin{proof}
    By symmetry we can restrict the sum to $j \geq k$.  For fixed $k$
    and $j$ we have
    \begin{eqnarray*}
        \norm{2}{\tilde{f}_{k,j}}^2 &=& \int_{\abs{t}<2s}
        \abs{\tilde{f}_{k,j}(t)}^2 dt
        + \int_{\abs{t}>2s} \abs{\tilde{f}_{k,j}(t)}^2 dt
    \end{eqnarray*}
    (with $s = \sqrt{k+j+1}$).  Because of Lemma~\ref{lemm:estimeLaguerre}, it is clear that the second
    integral is negligible in front of the first one, which we simply bound by $4 s \norm{\infty}{\tilde{f}_{k,j}}^2$.

    \noindent In view of (\ref{pattern}), the main result in~\cite{Krasikov:2007uq} can be rewritten as follows: if $k \geq
    35$ and $j-k \geq 24$, then
    \begin{equation}
        \label{eq:6}
        \norm{\infty}{\tilde{f}_{k,j}}^2 \leq 2888 \pi^2
        (j+1)^{\frac12} k^{-\frac16}.
    \end{equation}
    In consequence, for these values of $k$ and $j$,
    \begin{equation}
        \label{eq:3}
        \norm{2}{\tilde{f}_{k,j}}^2 \leq C (j k^{-\frac16} +
        j^{\frac12} k^{\frac13} ).
    \end{equation}

    \noindent On the other hand, a classical bound on Laguerre polynomials found in~\cite{Szego:1959uq} yields that, for fixed      values of $j-k$, $\norm{\infty}{\tilde{f}_{k,j}}^2 \leq C k^{\frac13}$, hence for all $k \geq 35$ and $j-k < 24$,
    \begin{equation}
        \label{eq:4}
        \norm{2}{\tilde{f}_{k,j}}^2 \leq C (j^{\frac12}k^{\frac13} +
        k^{\frac56}).
    \end{equation}

    \noindent When $k < 35$, we can use another result in~\cite{Krasikov:2005db} which gives $\norm{\infty}{\tilde{f}_{k,j}}^2      \leq C k^{\frac16}j^{\frac12}$ independently of $j-k$, thus
    \begin{equation}
        \label{eq:5}
        \norm{2}{\tilde{f}_{k,j}}^2 \leq C j.
    \end{equation}
    Comparing~(\ref{eq:3}),~(\ref{eq:4}) and~(\ref{eq:5}) we see that when $N$ is large enough, in the sum over $0 \leq j, k        \leq N$, the terms $k \geq 35$, $j-k \geq 24$ dominate and~(\ref{eq:3}) yields the first inequality.

    \noindent The second inequality is obtained by doing a similar computation, starting with $\norm{\infty}{f_{j,k}} \leq
    \norm{1}{\tilde{f}_{j,k}}$ and using~(\ref{eq:6}) to bound
    \begin{equation*}
        \norm{1}{\tilde{f}_{j,k}}^2 \leq C(j^{\frac32} k^{-\frac16}
        + j^{\frac12} k^{\frac56})
    \end{equation*}
    when $k \geq 35$ and $j-k \geq 24$.
\end{proof}

\noindent In the presence of noise, it is necessary to adapt the pattern
functions as follows.  From now on, we shall use the notation
\fbox{$\gamma \deq \frac{1-\eta}{4\eta}$}. When $\frac12 < \eta \leq
1$, we denote by $f^\eta_{k,j}$ the function which has the following
Fourier transform:
\begin{equation}
    \label{eq:patterneta}
    \tilde{f}^\eta_{k,j}(t) \deq \tilde{f}_{k,j}(t) e^{\gamma t^2}.
\end{equation}
When $0< \eta \leq \frac12$, we introduce a cut-off parameter $\delta >
0$ and define $f^{\eta,\delta}_{k,j}$ via its Fourier transform:
\begin{equation}
    \label{eq:patternetadelta}
    \tilde{f}^{\eta,\delta}_{k,j}(t) \deq
    \tilde{f}_{k,j}(t)e^{\gamma t^2} \mathbb{I}\paren{\abs{t}\leq
    \frac1\delta}.
\end{equation}

\noindent Then we compute bounds on these pattern functions.

\begin{lemma}
    \label{lm:5}
    For $1>\eta>1/2$,  there exist constants $C_2^\eta$ and
    $C_\infty^\eta$ such that
    \begin{center}
        $\sum_{j + k = 0}^{N} \norm{2}{f^\eta_{k,j}}^2 \leq
        C_2^\eta N^{\frac56} e^{8\gamma N}$  and $\quad \sum_{j + k=0}^{N} \norm{\infty}{f^\eta_{k,j}}^2
        \leq C_\infty^\eta N^{\frac{1}{3}} e^{8\gamma N}$.
    \end{center}
\end{lemma}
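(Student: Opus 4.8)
The plan is to follow the proof of Lemma~\ref{lm:4} line by line, the only genuinely new ingredient being the deconvolution factor $e^{\gamma t^2}$ in the definition~(\ref{eq:patterneta}). First I would pass to the Fourier side: by Plancherel, $\norm{2}{f^\eta_{k,j}}^2$ equals (up to the fixed constant of the Fourier convention) $\norm{2}{\tilde f^\eta_{k,j}}^2 = \int \abs{\tilde f_{k,j}(t)}^2 e^{2\gamma t^2}\,dt$, while $\norm{\infty}{f^\eta_{k,j}} \leq \frac1{2\pi}\norm{1}{\tilde f^\eta_{k,j}} = \frac1{2\pi}\int \abs{\tilde f_{k,j}(t)}\,e^{\gamma t^2}\,dt$. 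As before I restrict to $j \geq k$ by symmetry and split each integral at $\abs{t} = 2s$, $s \deq \sqrt{j+k+1}$, which is exactly the threshold $z = \abs{t}/2 = s$ of Lemma~\ref{lemm:estimeLaguerre} once one recalls, from the first line of~(\ref{pattern}) together with~(\ref{eq:FourierWmn}), that $\abs{\tilde f_{k,j}(t)} = \pi^2\abs{t}\, l_{j,k}(\abs{t}/2)$.

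The decisive difference with the noiseless case is the treatment of the weight on the main region $\abs{t}\leq 2s$. Bounding $e^{2\gamma t^2}$ by its maximum $e^{8\gamma s^2}$ and reusing Lemma~\ref{lm:4} would lose a full factor $s^2\sim j+k$. Instead I would write $e^{2\gamma t^2} = e^{8\gamma s^2}\,e^{-2\gamma(4s^2 - t^2)}$ and factor $4s^2 - t^2 = (2s-t)(2s+t)$; since the second factor is of size $\sim s$ near the edge, the substitution $u = 2s-t$ gives $\int_0^{2s} e^{-2\gamma(4s^2-t^2)}\,dt \lesssim \frac1{\gamma s}$. Pulling out $\norm{\infty}{\tilde f_{k,j}}^2$ yields a main-region contribution $\lesssim \frac1{\gamma s}\norm{\infty}{\tilde f_{k,j}}^2\,e^{8\gamma s^2}$ to $\norm{2}{f^\eta_{k,j}}^2$, and likewise $\lesssim \frac1{\gamma^2 s^2}\norm{\infty}{\tilde f_{k,j}}^2\,e^{8\gamma s^2}$ to $\norm{\infty}{f^\eta_{k,j}}^2$. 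This concentration of the mass near $\abs{t}=2s$, forced by the deconvolution weight, is precisely what lowers the polynomial order by $s^2$ relative to Lemma~\ref{lm:4}. Feeding in Krasikov's edge bound~(\ref{eq:6}), $\norm{\infty}{\tilde f_{k,j}}^2 \lesssim (j+1)^{\frac12}k^{-\frac16}$, and using $\sum_{j+k=m}(j+1)^{\frac12}k^{-\frac16}\sim m^{\frac43}$, the shell $j+k=m$ then contributes $\sim m^{\frac56}e^{8\gamma m}$ to the $\mathbb{L}_2$ sum and $\sim m^{\frac13}e^{8\gamma m}$ to the $\mathbb{L}_\infty$ sum. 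Because $e^{8\gamma m}$ grows geometrically, the sum over $m\leq N$ is dominated by the top shell $m=N$, producing exactly $N^{\frac56}e^{8\gamma N}$ and $N^{\frac13}e^{8\gamma N}$.

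The delicate part, and the step I expect to be the real obstacle, is the tail $\abs{t}>2s$, where the pattern function decays but $e^{2\gamma t^2}$ grows. If one used only $l_{j,k}(z)\leq \frac1\pi e^{-(z-s)^2}$ from Lemma~\ref{lemm:estimeLaguerre}, the product $e^{-2(\abs{t}/2-s)^2}e^{2\gamma t^2}$ would in fact peak beyond the edge, at $\abs{t}=2s/(1-4\gamma)$, with value $e^{8\gamma s^2/(1-4\gamma)}$ strictly larger than the main term $e^{8\gamma s^2}$; the crude near-edge bound is therefore insufficient. To close the argument one must exploit the genuine Gaussian far-field decay of $l_{j,k}$ visible from~(\ref{eq:deflmn}), namely $l_{j,k}(z)\sim C\,e^{-z^2}z^{j+k}$ for large $z$, so that $l_{j,k}(z)^2 e^{8\gamma z^2}\lesssim e^{-2(1-4\gamma)z^2}z^{2(j+k)}$. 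The hypothesis $\eta>\frac12$, equivalently $\gamma<\frac14$, is exactly what keeps the exponent $-2(1-4\gamma)$ negative, so the tail integral converges and can be bounded by at most the order of the main region. Matching the near-edge regime of Lemma~\ref{lemm:estimeLaguerre} with this far-field estimate, so that the tail never dominates, is the only truly technical point; once it is settled, the shell-by-shell summation above delivers the two announced bounds.
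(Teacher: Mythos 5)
Your diagnosis of where the difficulty lies is exactly right, and in fact sharper than the paper's own proof, which at the corresponding step simply asserts that ``the second integral is of the same order as the first one'' on the strength of Lemma~\ref{lemm:estimeLaguerre}: as you observe, $l_{j,k}(z)\leq\frac1\pi e^{-(z-s)^2}$ multiplied by $e^{2\gamma t^2}$ (with $z=\abs{t}/2$) peaks at $\abs{t}=2s/(1-4\gamma)$ with value $e^{8\gamma s^2/(1-4\gamma)}\gg e^{8\gamma s^2}$, so that assertion is not justified by the lemma invoked. However, your proposed repair does not close the gap. The far-field form $l_{j,k}(z)\sim C e^{-z^2}z^{j+k}$ retains only the top-degree term of $L_n^{\alpha}$ and is valid only when $2z^2\gg j+k$, whereas the integrand $l_{j,k}(z)^2e^{8\gamma z^2}$ attains its maximum at $z\asymp s/\sqrt{1-16\gamma^2}$, i.e.\ at $2z^2\asymp j+k$, exactly in the regime where all terms of the Laguerre polynomial contribute; the estimate you want to feed in is therefore unavailable at the critical point. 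Worse, the conclusion you are aiming for --- that the tail is at most of the order of the main region --- is false. Take $j=k$: by (\ref{pattern}), $\tilde f_{k,k}(t)=\pi\abs{t}e^{-t^2/4}L_k(t^2/2)$, so the substitution $x=t^2/2$ gives $\norm{2}{\tilde f^\eta_{k,k}}^2=2\sqrt2\,\pi^2\int_0^\infty\sqrt{x}\,e^{-(1-4\gamma)x}L_k(x)^2\,dx$. The classical formula $\int_0^\infty e^{-bx}L_k(x)^2\,dx=b^{-2k-1}(1-b)^{2k}\sum_{i=0}^k\binom{k}{i}^2(1-b)^{-2i}$, combined with the Cauchy--Schwarz bound $\sum_{i}\binom{k}{i}^2x^i\geq(k+1)^{-1}(1+\sqrt{x})^{2k}$, yields for $b=1-4\gamma$ a lower bound of order $(k+1)^{-1}\paren{\tfrac{1+4\gamma}{1-4\gamma}}^{2k}=(k+1)^{-1}(2\eta-1)^{-(j+k)}$ (the weight $\sqrt{x}$ only alters polynomial factors). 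Since $\log\tfrac{1+4\gamma}{1-4\gamma}=2\tanh^{-1}(4\gamma)>8\gamma$ for every $\gamma\in(0,\tfrac14)$, this exceeds $e^{8\gamma(j+k)}$ by a factor growing geometrically in $j+k$.

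The upshot is that no treatment of the tail can deliver the bound as stated: the true exponential rate of $\norm{2}{f^\eta_{k,j}}^2$ is $(2\eta-1)^{-(j+k)}$ --- the familiar loss factor per photon in the quantum tomography literature --- and not $e^{8\gamma(j+k)}$, these two rates agreeing only to first order as $\eta\to1$. Your handling of the region $\abs{t}\leq 2s$ is correct and coincides with the paper's (the Laplace-type estimate $\int_{\abs{t}<2s}e^{2\gamma t^2}dt\lesssim(\gamma s)^{-1}e^{8\gamma s^2}$ is exactly how the factor $s^{-1}$ is gained over the crude bound), but the lemma stands or falls on the tail, and there it falls; the exponent in Lemma~\ref{lm:5}, and consequently the constants entering Proposition~\ref{prop:5} and the rate in Theorem~\ref{theo:peudebruit}, would need to be corrected with $8\gamma$ replaced by $2\tanh^{-1}(4\gamma)=\log\tfrac{1}{2\eta-1}$.
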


\begin{proof}
    The proof is similar to the previous one and we skip some
    details. Once again we assume $j \geq k$ and write
    \begin{eqnarray*}
        \norm{2}{\tilde{f}_{k,j}^\eta}^2 &=& \int_{\abs{t}<2s}
        \abs{\tilde{f}_{k,j}(t)}^2 e^{2\gamma t^2}    dt
        + \int_{\abs{t}>2s} \abs{\tilde{f}_{k,j}(t)}^2
        e^{2\gamma t^2} dt
    \end{eqnarray*}
    (where $s = \sqrt{k+j+1}$).  Because of
    Lemma~\ref{lemm:estimeLaguerre}, the second integral is of the
    same order as the first one, which we bound by
    \begin{equation*}
        \norm{\infty}{\tilde{f}_{k,j}}^2 \int_{\abs{t} < 2s}
        e^{2\gamma t^2}  dt \leq C
        \norm{\infty}{\tilde{f}_{k,j}}^2
        s^{-1} e^{8\gamma s^2}.
    \end{equation*}
    In the sum we are considering the terms $k \geq 35$ and $j-k \geq
    24$ are dominant and, once again thanks to~(\ref{eq:6}),
    remembering that $s = \sqrt{j+k+1}$,
    \begin{equation*}
        \norm{2}{\tilde{f}_{k,j}^\eta}^2 \leq C
        k^{-\frac16} e^{8\gamma (j+k)}
    \end{equation*}
    hence the first inequality.

    \noindent The second inequality is, in the same fashion, based on
    \begin{eqnarray*}
        \norm{\infty}{{f}_{k,j}^\eta}^2 \leq
        \norm{1}{\tilde{f}_{k,j}^\eta}^2 &\leq& C \paren{j^{\frac14}
          k^{-\frac1{12}} \int_{\abs{t} < 2 s}
          e^{\gamma t^2} dt}^2 \\
        &\leq& C j^{-\frac12} k^{-\frac16} e^{8\gamma(j+k)}
    \end{eqnarray*}
    when $k \geq 35$ and $j-k \geq 24$, and the bound on the sum
    readily follows.
\end{proof}

%%%%%%%%%%%%%%%%%%%%%%%%%%%%%%%%%%
\subsection{Estimation procedure}
%%%%%%%%%%%%%%%%%%%%%%%%%%%%%%%%%%

\noindent For $N \deq N(n) \rightarrow \infty$ and $\delta \deq \delta(n)
\rightarrow 0$, let us define our estimator of $\rho_{j,k}$ for $0
\leq j+k \leq N-1$ by
\begin{equation}
    \label{estrhojk}
    \hat{\rho}^\eta_{j,k} \deq \frac{1}{n}
    \sum_{\ell=1}^{n}G_{j,k}\paren{\frac{Y_\ell}{\sqrt\eta},\Phi_\ell},
\end{equation}
where  \begin{equation*}
     G_{j,k}(x,\phi) \deq\left\{ \begin{array}{ll}
                                 f^\eta_{j,k}(x) e^{-i(j-k)\phi} &{\rm \ if \ } \frac12 < \eta \leq 1 \\
                                 f^{\eta,\delta}_{j,k}(x) e^{-i(j-k)\phi} &{\rm \ if \ } 0 < \eta \leq\frac12.
                                       \end{array} 
                               \right. 
	       \end{equation*}
using the pattern functions defined in~(\ref{eq:patterneta}) and~(\ref{eq:patternetadelta}). 
We assume that the density matrix $\rho$ belongs to the class $\mathcal{R}(B,r)$
defined in (\ref{eq.classcoeff}). In order to evaluate the performance
of our estimators we take the $\mathbb{L}_2$ distance on the space of
density matrices $\left\|\tau-\rho\right\|^2_2 \deq \Tr(|\tau-\rho|^2)
= \sum_{j,k=0}^\infty |\tau_{j,k}-\rho_{j,k}|^2$. We consider the mean
integrated square error (MISE) and split it into a troncature bias
term $b_1^2(n)$, a regularization bias terms $b_2^2(n)$ and a variance
term $\sigma^2(n)$.
\begin{eqnarray*}
    E \paren{ \sum_{j,k=0}^\infty \left|\hat{\rho}^\eta_{j,k}-
          \rho_{j,k}\right|^2 } &=& \sum_{j+k \geq N}
    \left|\rho_{j,k}\right|^2+\sum_{j+k=0}^{N-1}
    \left|E[\hat{\rho}^\eta_{j,k}]-
        \rho_{j,k}\right|^2 \\
    &&+ \sum_{j+k=0}^{N-1} E\left|\hat{\rho}^\eta_{j,k} -
        E[\hat{\rho}^\eta_{j,k}]\right|^2 \\
    &\eqd& b_1^2(n) + b_2^2(n) + \sigma^2(n).
\end{eqnarray*}
The following propositions give upper bounds for $b_1^2(n)$,
$b_2^2(n)$ and $\sigma^2(n)$ in the different cases $\eta = 1$, $1/2 <
\eta < 1$ or $0 < \eta \leq 1/2$ and $r=2$ or $0<r<2$. Their proofs
are defered to Appendix~\ref{sec.proofs}.
%%%%%%%%%%%%%%%%%%%%%%

\begin{proposition}
    \label{prop:3}
    Let $\hat{\rho}^\eta_{j,k}$ be the estimator defined by
    (\ref{estrhojk}), for $0<\eta<1$, with $\delta\rightarrow 0$ and
    $N\rightarrow\infty$ as $n\rightarrow \infty$, then for all $B>0$
    and $0 < r \leq 2$,
    \begin{equation}
        \label{biais}
        \sup_{\rho\in\mathcal{R}(B,r)} b_1^2(n) \leq
        c_1 N^{2-r/2} e^{-2 B N^{r/2}}
    \end{equation}
    where $c_1$ is a positive constant depending on $B$ and $r$.
\end{proposition}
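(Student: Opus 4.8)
The plan is to notice that the truncation bias term $b_1^2(n) = \sum_{j+k \geq N} \abs{\rho_{j,k}}^2$ is, despite appearing in a statement about the estimator $\hat{\rho}^\eta_{j,k}$, entirely independent of the data, the efficiency $\eta$, the cut-off $\delta$ and the pattern functions: it is simply the tail of the Hilbert--Schmidt norm of the true matrix beyond the truncation level $N$. Consequently the only hypothesis that enters is the membership $\rho \in \mathcal{R}(B,r)$, i.e.\ $\abs{\rho_{m,n}} \leq e^{-B(m+n)^{r/2}}$ from~(\ref{eq.classcoeff}), and the bound should hold uniformly over the class with a constant $c_1 = c_1(B,r)$.

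First I would square the class bound to get $\abs{\rho_{j,k}}^2 \leq e^{-2B(j+k)^{r/2}}$, so that
\begin{equation*}
    b_1^2(n) \;=\; \sum_{j+k \geq N} \abs{\rho_{j,k}}^2
    \;\leq\; \sum_{j+k \geq N} e^{-2B(j+k)^{r/2}}.
\end{equation*}
This reduces the whole estimate to controlling the tail of a series of exactly the type handled by Lemma~\ref{lemm:magicjohnson}. I would then apply that lemma with $\nu \deq r/2 \in (0,1]$, $C \deq 2B$ and $z \deq N$, which yields, for $N$ large enough,
\begin{equation*}
    \sum_{j+k \geq N} e^{-2B(j+k)^{r/2}}
    \;\leq\; \frac{2}{2B\cdot(r/2)}\, N^{2 - r/2}\, e^{-2B N^{r/2}}
    \;=\; \frac{2}{Br}\, N^{2-r/2}\, e^{-2B N^{r/2}},
\end{equation*}
so that $c_1 \deq \tfrac{2}{Br}$ does the job. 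Since the choice $z = N$ is the same for every $\rho$, this bound is automatically uniform over $\mathcal{R}(B,r)$, giving the supremum form of the claim. A pleasant feature is that the single parameter value $\nu = r/2$ covers both regimes at once: the case $0<r<2$ and the case $r=2$ (where $\nu = 1$ and the prefactor reduces to $N^{2-r/2} = N$) need no separate treatment.

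There is essentially no genuine obstacle here, precisely because the analytic work has been front-loaded into Lemma~\ref{lemm:magicjohnson}; the only point requiring a word of care is its ``$z \geq z_0$'' hypothesis, i.e.\ the estimate is only valid once $N$ is large enough. This is harmless in the present setting because the proposition assumes $N = N(n) \to \infty$ as $n \to \infty$, so for all sufficiently large $n$ the threshold is met and the asymptotic upper bound~(\ref{biais}) holds. I would simply remark that $c_1$ may be taken as $\tfrac{2}{Br}$ (possibly enlarged to absorb the finitely many small values of $N$ if one wants a bound valid for every $n$), depending only on $B$ and $r$ as stated.
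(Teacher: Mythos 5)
Your proof is correct and follows essentially the same route as the paper: both reduce $b_1^2(n)$ to the tail sum $\sum_{j+k\geq N} e^{-2B(j+k)^{r/2}}$ and then bound that tail by an integral comparison, the only difference being that you invoke Lemma~\ref{lemm:magicjohnson} (with $\nu=r/2$, $C=2B$, $z=N$) where the paper redoes the comparison directly via a double integral in polar coordinates. Your remark about the $z\geq z_0$ threshold being absorbed by $N(n)\to\infty$ (or by enlarging $c_1$) is exactly the right point of care, and $c_1=2/(Br)$ depends only on $B$ and $r$ as required.
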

%%%%%%%%%%%%%%%%%%%%%%%

\begin{proposition}
    \label{prop:4}
    Let $\hat{\rho}^\eta_{j,k}$ be the estimator defined by
    (\ref{estrhojk}), for $0 < \eta \leq 1/2$, with $N \rightarrow
    \infty$ as $n \to \infty$ and $1 / \delta \geq 2\sqrt{N}$.
    In the case $r = 2$, for $\beta := B /(1+\sqrt{B})^2$ there exists $c_2$, 
	while in the case $0 < r < 2$, for any $\beta < B$ there exists $c_2$
    and $n_0$ such that for $n \geq n_0$: 
    \begin{equation}
        \label{biaisbis}
        \sup_{\rho\in\mathcal{R}(B,r)}b_2^2(n) \leq
        c_2 N^{2} \delta^{4r-12} e^{-\frac{2\beta
          }{(2\delta)^r}-\frac 12
          \left(\frac{1}{\delta}-2\sqrt{N}\right)^2}.
    \end{equation}
\end{proposition}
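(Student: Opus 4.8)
The plan is to identify $E[\hat\rho^\eta_{j,k}]$ exactly and to read off the regularization bias as a Fourier tail. Since $(Y_\ell,\Phi_\ell)$ has density $p^\eta_\rho$ with respect to $\frac1\pi\lambda$,
\[
  E\bigl[\hat\rho^\eta_{j,k}\bigr]=\frac1\pi\int_0^\pi\!\!\int_{\R}
  f^{\eta,\delta}_{j,k}\!\paren{\tfrac{y}{\sqrt\eta}}e^{-i(j-k)\phi}\,
  p^\eta_\rho(y,\phi)\,dy\,d\phi .
\]
Applying Parseval in the first variable, rescaling $t\mapsto t/\sqrt\eta$ and using \eqref{fourierproun} with $\Fourier{N}^\eta(t)=e^{-\frac{1-\eta}{4}t^2}$, the rescaling turns the noise factor into $e^{-\gamma t^2}$, which is exactly cancelled by the weight $e^{\gamma t^2}$ built into \eqref{eq:patternetadelta} (recall $\gamma=\frac{1-\eta}{4\eta}$). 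Only the cut-off survives, so that
\[
  E\bigl[\hat\rho^\eta_{j,k}\bigr]=\frac1\pi\int_0^\pi e^{-i(j-k)\phi}
  \paren{\frac1{2\pi}\int_{\abs t\le 1/\delta}
  \tilde f_{j,k}(t)\,\mathcal F_1[p_\rho(\cdot,\phi)](-t)\,dt}d\phi .
\]
Comparing with the defining identity \eqref{rhojk}, which is the same expression with $t$ ranging over all of $\R$, the bias $E[\hat\rho^\eta_{j,k}]-\rho_{j,k}$ equals, up to sign, the contribution of $\abs t>1/\delta$.

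Next I would bound the two factors on that tail. By the Fourier slice theorem applied to \eqref{eq:prhophi}--\eqref{eq.Radon.transform} one has $\mathcal F_1[p_\rho(\cdot,\phi)](t)=\Fourier{W_\rho}(t\cos\phi,t\sin\phi)$; since $\abs t\ge 1/\delta\ge z_0$ once $\delta$ is small (i.e.\ $n\ge n_0$), Proposition~\ref{prop:popetown} (for $0<r<2$) or Proposition~\ref{prop:popetown2} (for $r=2$) gives the $\phi$-uniform bound $\abs{\mathcal F_1[p_\rho(\cdot,\phi)](t)}\le A(\abs t/2)\,e^{-\beta(\abs t/2)^r}$ with the $\beta$ stated in each regime. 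For the pattern function, combining \eqref{pattern}, \eqref{eq:FourierWmn} and \eqref{eq:deflmn} yields $\abs{\tilde f_{j,k}(t)}=\pi^2\abs t\,l_{j,k}(\abs t/2)$; and because $\abs t/2> 1/(2\delta)\ge\sqrt N\ge s\deq\sqrt{j+k+1}$, Lemma~\ref{lemm:estimeLaguerre} bounds this by $\pi\abs t\,e^{-(\abs t/2-s)^2}$.

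I would then collect these estimates, integrate out $\phi$ (a factor $\pi$) and substitute $v=\abs t/2$ to reach
\[
  \abs{E[\hat\rho^\eta_{j,k}]-\rho_{j,k}}\le
  4\int_{1/(2\delta)}^{\infty} v\,A(v)\,e^{-(v-s)^2-\beta v^r}\,dv .
\]
Using $s\le\sqrt N\le 1/(2\delta)$, the inequality $e^{-(v-s)^2}\le e^{-(v-\sqrt N)^2}$ (valid for $v\ge\sqrt N$) removes the $(j,k)$-dependence. The integrand is concentrated at the lower endpoint $v=1/(2\delta)$, where both $e^{-(v-\sqrt N)^2}$ and $e^{-\beta v^r}$ are maximal; writing $v=\frac1{2\delta}+w$ and using $e^{-(v-\sqrt N)^2}\le e^{-(\frac1{2\delta}-\sqrt N)^2}e^{-w^2}$, $e^{-\beta v^r}\le e^{-\beta/(2\delta)^r}$ together with the polynomial growth $A(v)\sim v^{4-r}$ (the same in both regimes), the residual $w$-integral contributes only a power of $1/\delta$. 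This gives a per-coefficient bound of the form $C\,(1/\delta)^{p(r)}\,e^{-\frac{\beta}{(2\delta)^r}-(\frac1{2\delta}-\sqrt N)^2}$.

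Finally I would square this bound and sum over the $\frac{N(N+1)}2$ pairs $(j,k)$ with $j+k\le N-1$. The cardinality yields the factor $N^2$; the squaring doubles both exponents, producing $\frac{2\beta}{(2\delta)^r}$ and, since $2\paren{\frac1{2\delta}-\sqrt N}^2=\frac12\paren{\frac1\delta-2\sqrt N}^2$, the Gaussian term of \eqref{biaisbis}; and the polynomial prefactor combines into $\delta^{4r-12}$, giving the announced bound. The main obstacle is this last integral estimate: one must extract the sharp exponential rate while pinning down the order of the polynomial prefactor uniformly in $(j,k)$ and over the whole class $\mathcal{R}(B,r)$, carrying the two regimes $0<r<2$ and $r=2$ (with their different forms of both $A$ and $\beta$) through in parallel.
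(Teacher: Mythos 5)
Your argument follows the paper's proof essentially step for step: the exact computation of $E[\hat\rho^\eta_{j,k}]$ by Parseval with the cancellation of $e^{-\gamma t^2}$ against the weight $e^{\gamma t^2}$, the identification of the bias with the Fourier tail $\abs{t}>1/\delta$, the bounds $\abs{\tilde f_{j,k}(t)}=\pi^2\abs{t}\,l_{j,k}(\abs{t}/2)\le\pi\abs{t}e^{-(\abs{t}/2-s)^2}$ from Lemma~\ref{lemm:estimeLaguerre} and $\abs{\Fourier{W_\rho}(t\cos\phi,t\sin\phi)}\le A(\abs{t}/2)e^{-\beta(\abs{t}/2)^r}$ from Propositions~\ref{prop:popetown} and~\ref{prop:popetown2}, and the count of $O(N^2)$ pairs. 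The only divergence is the final tail-integral estimate, and there your execution falls slightly short of the statement. Localizing with the Gaussian factor via $e^{-(v-\sqrt N)^2}\le e^{-(\frac1{2\delta}-\sqrt N)^2}e^{-w^2}$ while freezing $e^{-\beta v^r}$ at the endpoint leaves $\int_0^\infty(\frac1{2\delta}+w)^{5-r}e^{-w^2}dw\le C(1/\delta)^{5-r}$, hence a prefactor $(1/\delta)^{10-2r}$ after squaring, not the claimed $(1/\delta)^{12-4r}$; for $1<r\le2$ this is strictly weaker (e.g.\ $(1/\delta)^6$ versus $(1/\delta)^4$ at $r=2$). The paper does the opposite: it freezes the Gaussian at its endpoint value $e^{-\frac14(1/\delta-2s)^2}$ and applies Lemma 8 of Butucea--Tsybakov to $\int_{1/\delta}^\infty t^{5-r}e^{-\beta2^{-r}t^r}dt$, whose localization by the $t^r$ term contributes the extra factor $(1/\delta)^{1-r}$ and yields exactly $\delta^{4r-12}$. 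This is a purely polynomial discrepancy, and the exponential rates (which are all that matter in Theorem~\ref{theo:bcpdebruit}) are identical; but as written your sketch does not establish the stated prefactor for $r>1$, and your closing claim that the polynomial prefactor ``combines into $\delta^{4r-12}$'' is not what your computation actually produces.
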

%%%%%%%%%%%%%%%%%%%%%%%%
\noindent Note that for $1/2 < \eta \leq 1$ we have $b_2(n)=0$ for all $0 < r
\leq 2$ ($\hat \rho^\eta_{j,k}$ is unbiased).

%%%%%%%%%%%%%%%%%%%%%%%%
\begin{proposition}
    \label{prop:5}
    For $\hat{\rho}^\eta_{j,k}$ the estimator defined by
    (\ref{estrhojk}),
    \begin{eqnarray}
        \label{eq:varbcpdebruit}
        \sup_{\rho\in\mathcal{R}(B,r)}\sigma^2(n) \leq c_3
        \frac{\delta N^{17/6}}{n} e^{\frac{2\gamma}{\delta^2}}
              &&\quad{\rm if \ } 0<\eta\leq 1/2\\
        \label{eq:varpeudebruit}
        \sup_{\rho\in\mathcal{R}(B,r)}\sigma^2(n) \leq c_3'
        \frac{N^{1/3}}{n}e^{8\gamma N}&&\quad{\rm if \ }1/2<\eta<1 \\
        \label{eq:variancesansbruit}
        \sup_{\rho\in\mathcal{R}(B,r)}\sigma^2(n)
        \leq c_3'' \frac{N^{\frac{17}{6}}}{n}&& \quad{\rm if \ } \eta = 1
    \end{eqnarray}
    where $c_3$, $c_3'$ are positive constants depending on $\eta$.
\end{proposition}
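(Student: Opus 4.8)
The plan is to exploit that $\hat\rho^\eta_{j,k}$ in~(\ref{estrhojk}) is an empirical mean of $n$ i.i.d. terms, so that $\sigma^2(n)=\sum_{j+k=0}^{N-1}\mathrm{Var}(\hat\rho^\eta_{j,k})=\frac1n\sum_{j+k=0}^{N-1}\mathrm{Var}\paren{G_{j,k}\paren{Y_1/\sqrt\eta,\Phi_1}}$. Bounding each variance by the corresponding second moment and using $\abs{e^{-i(j-k)\phi}}=1$, everything reduces to controlling the single $n$-independent sum $\sum_{j+k\le N-1}E\abs{f^\bullet_{j,k}(Y_1/\sqrt\eta)}^2$, where $f^\bullet$ denotes $f^\eta_{j,k}$ or $f^{\eta,\delta}_{j,k}$ according to the value of $\eta$.

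I would use two complementary bounds for $E\abs{f(Y_1/\sqrt\eta)}^2$. The first, $E\abs{f(Y_1/\sqrt\eta)}^2\le\norm{\infty}{f}^2$, uses only that $(Y_1,\Phi_1)$ has total mass one and needs no information on the state. The second, $E\abs{f(Y_1/\sqrt\eta)}^2\le\sqrt\eta\,\norm{\infty}{p^\eta_\rho}\,\norm{2}{f}^2$ (by the substitution $x=y/\sqrt\eta$ and integrating out $\phi$), is sharper but requires a uniform bound on the observation density over $\mathcal R(B,r)$. For $\eta<1$ this is automatic: writing $p^\eta_\rho(\cdot,\phi)$ as the convolution $\frac1{\sqrt\eta}p_\rho(\cdot/\sqrt\eta,\phi)\ast N^\eta$ and applying Young's inequality gives $\norm{\infty}{p^\eta_\rho(\cdot,\phi)}\le\norm{\infty}{N^\eta}=\paren{\pi(1-\eta)}^{-1/2}$. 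For $\eta=1$ there is no convolution, and I would instead invoke Propositions~\ref{prop:popetown} and~\ref{prop:popetown2}: since $p_\rho=\mathcal R[W_\rho]$ integrates $W_\rho$ along a line at distance $\abs x$ from the origin, the decay $\abs{W_\rho(q,p)}\le A(z)e^{-\beta z^r}$ makes this line integral converge and stay bounded uniformly over the class.

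The cases $\eta=1$ and $1/2<\eta<1$ then follow by choosing the better of the two tools. For $\eta=1$ the pattern functions are the plain $f_{j,k}$ ($\gamma=0$), and the $\mathbb L_2$ bound of Lemma~\ref{lm:4}, $\sum_{j+k\le N}\norm{2}{f_{k,j}}^2\le C_2N^{17/6}$, is tighter than the sup-norm one ($N^{10/3}$); combined with the density bound this yields~(\ref{eq:variancesansbruit}). For $1/2<\eta<1$ the situation reverses: Lemma~\ref{lm:5} gives $\sum\norm{\infty}{f^\eta_{k,j}}^2\le C_\infty^\eta N^{1/3}e^{8\gamma N}$, which beats the $\mathbb L_2$ estimate $N^{5/6}e^{8\gamma N}$, so the sup-norm tool (needing no density bound) gives~(\ref{eq:varpeudebruit}) directly.

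The main obstacle is the severely ill-posed regime $0<\eta\le1/2$, where $\gamma\ge1/4$ and the deconvolution weight $e^{2\gamma t^2}$ would make $\int\abs{\tilde f_{k,j}}^2e^{2\gamma t^2}dt$ diverge without the frequency cut-off $\abs t\le1/\delta$. Here I would combine the density bound with Parseval, so that $\norm{2}{f^{\eta,\delta}_{j,k}}^2=\frac1{2\pi}\int_{\abs t\le1/\delta}\abs{\tilde f_{k,j}(t)}^2e^{2\gamma t^2}dt$, and estimate this integral under the standing assumption $1/\delta\ge2\sqrt N$. The noise-amplification factor $e^{2\gamma/\delta^2}$ comes from the Gaussian weight at the cut-off frequency, while the polynomial part $\delta N^{17/6}$ must be extracted by balancing this blow-up against the Laguerre decay of $\tilde f_{k,j}$: by~(\ref{pattern}) and~(\ref{eq:FourierWmn}) one has $\abs{\tilde f_{k,j}(t)}^2=\pi^4t^2\,l_{j,k}(\abs t/2)^2$, and since $1/(2\delta)\ge\sqrt N\ge s=\sqrt{j+k+1}$, Lemma~\ref{lemm:estimeLaguerre} places $l_{j,k}$ in its exponentially decaying range near the cut-off. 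Summing over $j+k\le N-1$ through Lemma~\ref{lm:4} and accounting for the $O(\delta)$ measure of the effective cut-off layer is then expected to produce~(\ref{eq:varbcpdebruit}). I expect the delicate point to be precisely this interplay---showing that the mass concentrates near $\abs t=1/\delta$ yet the surviving polynomial order is $N^{17/6}$ rather than the cruder $N^{10/3}$---so that afterwards $\delta$ and $N$ can be tuned jointly against the bias terms of Propositions~\ref{prop:3} and~\ref{prop:4}.
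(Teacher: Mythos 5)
Your reduction to the single-observation second moment, and your treatment of the cases $\eta=1$ and $\frac12<\eta<1$, match the paper's proof: one bounds $\sigma^2_{j,k}(n)\le\frac1nE\abs{G_{j,k}(Y/\sqrt\eta,\Phi)}^2$, then uses a uniform bound on the observation density together with $\sum\norm{2}{f_{j,k}}^2\le C_2N^{17/6}$ when $\eta=1$, and the plain sup-norm bound $\sum\norm{\infty}{f^\eta_{j,k}}^2\le C_\infty^\eta N^{1/3}e^{8\gamma N}$ when $\frac12<\eta<1$. (The paper's density bound for $\eta=1$ is its Lemma~\ref{unifbound}, proved in the Fourier domain; your line-integral argument via Propositions~\ref{prop:popetown} and~\ref{prop:popetown2} serves the same purpose.)

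The gap is in the case $0<\eta\le\frac12$, exactly at the step you flag as delicate. After Parseval you must bound $\int_{\abs t\le1/\delta}\abs{\tilde f_{j,k}(t)}^2e^{2\gamma t^2}dt$ so that \emph{both} the factor $\delta$ and the polynomial order $N^{17/6}$ of Lemma~\ref{lm:4} survive, but the two natural H\"older splittings each lose one of them: pulling out $\sup_{\abs t\le1/\delta}e^{2\gamma t^2}=e^{2\gamma/\delta^2}$ keeps $\norm{2}{\tilde f_{j,k}}^2$ and hence Lemma~\ref{lm:4}, but loses the $\delta$; pulling out $\norm{\infty}{\tilde f_{j,k}}^2$ against $\int_{\abs t\le1/\delta}e^{2\gamma u^2}du\le C\gamma^{-1}\delta\,e^{2\gamma/\delta^2}$ recovers the $\delta$ but then the sum runs over sup-norms of the $\tilde f_{j,k}$, which Lemma~\ref{lm:4} does not control --- one must return to~(\ref{eq:6}), which in fact gives $\sum_{j+k\le N}\norm{\infty}{\tilde f_{j,k}}^2\le CN^{7/3}\le CN^{17/6}$, so this repair does close the argument. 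The paper sidesteps the interplay entirely, and needs no density bound at all in this regime: setting $\widetilde K_\delta(t)\deq e^{\gamma t^2}\mathbb{I}(\abs t\le1/\delta)$ one has $f^{\eta,\delta}_{j,k}=f_{j,k}\ast K_\delta$, Cauchy--Schwarz applied pointwise to the convolution integral gives $\abs{f^{\eta,\delta}_{j,k}(x)}^2\le\norm{2}{f_{j,k}}^2\,\norm{2}{K_\delta}^2$ for every $x$, and Plancherel gives $\norm{2}{K_\delta}^2=\frac1\pi\int_0^{1/\delta}e^{2\gamma u^2}du\le C_\eta\,\delta\,e^{2\gamma/\delta^2}$; the bound $\delta N^{17/6}e^{2\gamma/\delta^2}/n$ then falls out of Lemma~\ref{lm:4} with the two factors cleanly decoupled. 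So your plan is completable, but not with the tools you name: either import the sup-norm estimates on $\tilde f_{j,k}$ from~(\ref{eq:6}), or, better, use the convolution factorization.
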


\noindent We measure the accuracy of $\hat{\rho}^\eta_{j,k}$ by the
maximal risk over the class $\mathcal{R}(B,r)$
\begin{eqnarray}
    \label{risk1}
    \limsup_{n\to\infty}
    \sup_{\rho\in\mathcal{R}(B,r)}\varphi_n^{-2}E\left(\sum_{j,k=0}^\infty
        \left|\hat{\rho}^\eta_{j,k}- \rho_{j,k}\right|^2\right)\leq
    C_0.
\end{eqnarray}
where $C_0$ is a positive constant and $\varphi_n^{2}$ is a sequence
which tends to 0 when $n\rightarrow\infty$ and it is the rate of
convergence. Cases $\eta = 1$ (no noise), $\frac12<\eta<1$ (weak
noise) and $0<\eta\leq\frac12$ (strong noise) are studied respectively in
Theorems~\ref{theo:sansbruit},~\ref{theo:peudebruit}
and~\ref{theo:bcpdebruit}.
%%%%%%%%%%%%%%%%%%%%%%

\begin{theorem}
    \label{theo:sansbruit}
    When $\eta = 1$, the estimator defined in (\ref{estrhojk}) for the
    model (\ref{noisy.data}), where the unknown state belongs to the
    class $\mathcal{R}(B,r)$, satisfies the upper bound (\ref{risk1})
    with
    \begin{equation*}
        \varphi_n^2 = \log(n)^{\frac{17}{3 r}} n^{-1}
    \end{equation*}
    obtained by taking $N(n) \deq \paren{\frac{\log(n)}{2
        B}}^{\frac2r}$.
\end{theorem}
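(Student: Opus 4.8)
The plan is to bound the mean integrated square error by assembling the three estimates already at our disposal, then to substitute the prescribed choice of $N(n)$ and verify that the variance term alone sets the rate. First I would recall the decomposition of the risk into $b_1^2(n) + b_2^2(n) + \sigma^2(n)$ established just before Proposition~\ref{prop:3}. Since $\eta = 1$ forces $\gamma = \frac{1-\eta}{4\eta} = 0$, no regularization is introduced and the estimator is unbiased apart from truncation, so $b_2^2(n) = 0$ (as recorded in the remark following Proposition~\ref{prop:4}). It then remains to control the truncation bias $b_1^2(n)$ by Proposition~\ref{prop:3} and the variance $\sigma^2(n)$ by the bound (\ref{eq:variancesansbruit}) of Proposition~\ref{prop:5}. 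This yields, uniformly over $\mathcal{R}(B,r)$,
\[
    E\paren{ \sum_{j,k=0}^\infty \abs{\hat{\rho}^\eta_{j,k} - \rho_{j,k}}^2 }
    \leq c_1 N^{2 - r/2} e^{-2B N^{r/2}} + c_3'' \frac{N^{17/6}}{n}.
\]

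The decisive step is the substitution $N(n) \deq \paren{\log(n)/(2B)}^{2/r}$, which is engineered precisely so that $2B N^{r/2} = \log n$ and hence $e^{-2B N^{r/2}} = 1/n$. With this choice the truncation bias becomes of order $\log(n)^{4/r - 1}\, n^{-1}$, while the variance term is of order $\log(n)^{17/(3r)}\, n^{-1}$. Comparing the two powers of $\log n$, I would note that for every $0 < r \leq 2$ one has $\frac{4}{r} - 1 = \frac{12 - 3r}{3r} < \frac{17}{3r}$, since $12 - 3r < 17$. Consequently the bias is negligible in front of the variance, both contributions are at most of order $\log(n)^{17/(3r)}\, n^{-1}$, and the stated rate $\varphi_n^2 = \log(n)^{17/(3r)}\, n^{-1}$ follows, the constant $C_0$ in (\ref{risk1}) absorbing $c_1$ and $c_3''$.

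No serious obstacle is expected: the argument is essentially a balancing calculation. The only points requiring care are the verification that the prescribed $N(n)$ makes the exponential factor decay exactly like $1/n$ (which is what produces the polynomial-in-$\log n$ correction and ensures the variance, not the bias, dictates the rate) and the observation that $N(n) \to \infty$ as $n \to \infty$, so that the hypotheses of Propositions~\ref{prop:3} and~\ref{prop:5} are met for $n$ large enough and the $\limsup$ in (\ref{risk1}) is finite.
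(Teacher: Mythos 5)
Your proposal is correct and follows essentially the same route as the paper: the paper's one-line proof is exactly the check you carry out, namely that with $N(n) = (\log(n)/(2B))^{2/r}$ the truncation bias $c_1 N^{2-r/2}e^{-2BN^{r/2}} \asymp \log(n)^{4/r-1}n^{-1}$ is dominated by the variance bound $c_3'' N^{17/6}/n \asymp \log(n)^{17/(3r)}n^{-1}$, while $b_2^2(n)=0$ since $\eta=1$. The comparison of logarithmic exponents $\frac{12-3r}{3r} < \frac{17}{3r}$ is the right justification and nothing further is needed.
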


\begin{proof}
    With the proposed $N(n)$ one checks that the bias~(\ref{biais}) is
    smaller than the variance~(\ref{eq:variancesansbruit}) which is
    bounded by a constant times ${\log(n)^{\frac{17}{3 r}}}{n^{-1}}$.
\end{proof}

\begin{theorem}
    \label{theo:peudebruit}
    When $\frac12<\eta<1$, the estimator defined in (\ref{estrhojk})
    for the model (\ref{noisy.data}), where the unknown state belongs
    to the class $\mathcal{R}(B,r)$, satisfies the upper bound
    (\ref{risk1}) with
    \begin{itemize}
      \item For $r=2$,
        \begin{equation*}
            \varphi_n^2 = \log(n)^{ \frac{12\gamma+B}{3(4\gamma +B)}} 
				n^{-\frac{B}{4\gamma+B}}
        \end{equation*}
    with $N(n) \deq \frac{\log(n)}{2(4\gamma+B)}\left( 1+\frac 23 \frac{\log(\log n)}{\log(n)}\right)$.
      \item For $0<r<2$,
        \begin{equation*}
            \varphi_n^2 = \log(n)^{2-r/2}
            e^{-2B{N(n)}^{r/2}}
        \end{equation*}
        where $N(n)$ is the solution of the equation $8 \gamma N + 2 B
        N^{r/2} = \log(n)$.
    \end{itemize}
   
    \noindent In that case we have $N(n) = \frac{1}{8\gamma} \log(n) -
    \frac{2B}{(8\gamma)^{1+r/2}} \log(n)^{r/2} + o(\log(n)^{r/2})$.
\end{theorem}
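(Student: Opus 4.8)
The plan is to run a straightforward bias--variance trade-off, taking advantage of the fact that in the regime $\frac12 < \eta < 1$ the regularization bias vanishes, $b_2(n) = 0$, as recorded in the remark following Proposition~\ref{prop:4}. Consequently the MISE is governed by $b_1^2(n) + \sigma^2(n)$. I would simply insert the truncation-bias bound of Proposition~\ref{prop:3}, $b_1^2(n) \leq c_1 N^{2-r/2} e^{-2BN^{r/2}}$, together with the variance bound~(\ref{eq:varpeudebruit}), $\sigma^2(n) \leq c_3' N^{1/3} n^{-1} e^{8\gamma N}$. The entire statement then reduces to choosing $N = N(n)$ that equilibrates these two competing terms and reading off the resulting rate, checking in each case that the chosen $N(n)$ indeed satisfies $N \to \infty$.

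For $0 < r < 2$, I would define $N(n)$ implicitly as the solution of $8\gamma N + 2B N^{r/2} = \log(n)$, which is exactly the equation that equates the exponential parts of the two error terms: with this choice $e^{8\gamma N}/n = e^{-2BN^{r/2}}$, so the variance collapses to $c_3' N^{1/3} e^{-2BN^{r/2}}$. Since $r < 2$ gives $2 - r/2 > 1/3$, the truncation bias $N^{2-r/2}e^{-2BN^{r/2}}$ dominates the variance, and the total risk is of order $N^{2-r/2} e^{-2BN^{r/2}}$. To present this as the announced $\varphi_n^2 = \log(n)^{2-r/2} e^{-2BN(n)^{r/2}}$ I would solve the implicit equation asymptotically: iterating once on $8\gamma N = \log(n) - 2BN^{r/2}$ yields $N(n) = \frac{1}{8\gamma}\log(n) - \frac{2B}{(8\gamma)^{1+r/2}}\log(n)^{r/2} + o(\log(n)^{r/2})$, whence $N^{2-r/2} \sim (8\gamma)^{-(2-r/2)}(\log n)^{2-r/2}$, the constant being harmlessly absorbed into $C_0$.

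For $r = 2$ the competing exponentials are $e^{-2BN}$ in the bias and $e^{8\gamma N}/n$ in the variance, so matching their leading behavior forces $N \sim \frac{\log(n)}{2(4\gamma + B)}$ and produces the polynomial factor $n^{-B/(4\gamma+B)}$. The delicate point is pinning down the power of $\log n$. I would substitute the proposed $N(n) = \frac{\log(n)}{2(4\gamma+B)}\bigl(1 + \tfrac23 \tfrac{\log\log n}{\log n}\bigr)$ and compute both error terms explicitly: the correction $\tfrac23 \tfrac{\log\log n}{\log n}$ is engineered precisely so that the logarithmic factor $(\log n)^{(12\gamma+B)/(3(4\gamma+B))}$ emerges identically in $b_1^2(n)$ (which carries a surplus factor $N^1$) and in $\sigma^2(n)$ (which carries $N^{1/3}$). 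Verifying that the $\log\log n$ contributions in the two terms conspire to the same exponent of $\log n$ is the main computational obstacle; everything else is bookkeeping, and I expect no genuine difficulty beyond this careful matching of the secondary logarithmic factors.
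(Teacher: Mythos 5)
Your proposal is correct and follows essentially the same route as the paper's own proof: a bias--variance balance between the truncation bias of Proposition~\ref{prop:3} and the variance bound~(\ref{eq:varpeudebruit}), using $b_2(n)=0$ in this regime, with the same choices of $N(n)$ and the same verification that the two error terms match (for $r=2$) or that the bias dominates (for $0<r<2$). Your computations, including the role of the $\frac23\frac{\log\log n}{\log n}$ correction in equalizing the logarithmic exponents and the one-step iteration giving the asymptotic expansion of $N(n)$, check out and merely make explicit what the paper states tersely.
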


\begin{proof}
    When $r=2$, the proposed $N(n)$ ensures that the
    variance~(\ref{eq:varpeudebruit}) is equivalent to the
    bias~(\ref{biais}), which is bounded by a constant times 
	$\log(n)^{\frac{12\gamma +B}{3(4\gamma +B)}}
    n^{-\frac{B}{4\gamma+B}}$.

    \noindent When $0 < r < 2$, the proposed $N(n)$ makes the
    variance~(\ref{eq:varpeudebruit}) bounded by a constant times
    $e^{-2B{N(n)}^{r/2}}$, which is smaller than the bias, the latter
    being bounded by a constant times $N(n)^{2-r/2}
    e^{-2B{N(n)}^{r/2}}$.

     \noindent The asymptotic expansion of $N(n)$ is a standard consequence of
    its definition by the equation $8 \gamma N + 2 B N^{r/2} =
    \log(n)$.
\end{proof}

\begin{theorem}
    \label{theo:bcpdebruit}
    When $0 < \eta \leq \frac12$, the estimator defined in
    (\ref{estrhojk}) for the model (\ref{noisy.data}), where the
    unknown state belongs to the class $\mathcal{R}(B,r)$, satisfies
    the upper bound (\ref{risk1}) with
    \begin{equation*}
        \varphi_n^2 = N^{2-r/2}e^{-2 B N^{r/2}}
    \end{equation*}
    where $N$ and $\delta$ are solutions of the system
   \begin{equation}
     \left\{ \begin{array}{ll}
            \frac{2 \beta}{(2\delta)^r} + \frac12
            (\frac1\delta-2\sqrt{N})^2 + \frac{2 \gamma}{\delta^2}
            = \log(n)\\
            \frac{2\beta}{(2\delta)^r} + \frac12
            (\frac1\delta-2\sqrt{N})^2 - 2B N^{r/2} = (\log\log(n))^2
                                       \end{array} 
                               \right. 
	    \label{eq:7}
\end{equation}
    for arbitrary $\beta < B$ in the case $0 < r < 2$ or
  \begin{equation}
     \left\{ \begin{array}{ll}
            \frac{\beta + 4 \gamma}{2 \delta^2} + \frac12
            (\frac1\delta - 2\sqrt{N})^2 - \frac53 \log(N) = \log(n) \\
            \frac{\beta}{2\delta^2} + \frac12 (\frac1\delta -
            2\sqrt{N})^2 - 2BN - 3 \log(N) = 0
                                       \end{array} 
                               \right. 
	   \label{eq:8}
\end{equation}
    with $\beta \deq \frac{B}{(1+\sqrt{B})^2}$ in the case $r = 2$.
\end{theorem}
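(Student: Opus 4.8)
The plan is to choose the tuning parameters $N=N(n)$ and $\delta=\delta(n)$ as the solutions of the system~(\ref{eq:7}) (for $0<r<2$), respectively~(\ref{eq:8}) (for $r=2$), and to show that with this choice the mean integrated square error is asymptotically dominated by the truncation bias $b_1^2(n)$, which is precisely of order $\varphi_n^2=N^{2-r/2}e^{-2BN^{r/2}}$. Concretely, I start from the decomposition $\mathrm{MISE}=b_1^2(n)+b_2^2(n)+\sigma^2(n)$ and insert the three upper bounds from Propositions~\ref{prop:3},~\ref{prop:4} and~\ref{prop:5}, using the variance bound~(\ref{eq:varbcpdebruit}) valid for $0<\eta\leq 1/2$. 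The whole proof then reduces to comparing the orders of the three resulting expressions under the constraints imposed by the system, and to reading off~(\ref{risk1}).

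The first step is to read off what each equation of the system accomplishes. The first equation is engineered so that, after substituting $\tfrac{2\gamma}{\delta^2}=\log n-\tfrac{2\beta}{(2\delta)^r}-\tfrac12\paren{\tfrac1\delta-2\sqrt N}^2$ into~(\ref{eq:varbcpdebruit}), the variance bound becomes $c_3\,\delta N^{17/6}\,e^{-\frac{2\beta}{(2\delta)^r}-\frac12\paren{\frac1\delta-2\sqrt N}^2}$, carrying exactly the same exponential factor as the regularization bias bound~(\ref{biaisbis}); the two then differ only through the polynomial prefactors $\delta N^{17/6}$ and $N^2\delta^{4r-12}$. In short, the first equation tunes $\delta$ so that the variance and the regularization bias are of the same exponential order (in the $r=2$ system~(\ref{eq:8}) the same substitution applies, with the prefactor contributions written out as the $\log N$ corrections, using $\delta\sim\tfrac12 N^{-1/2}$).

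The second equation then controls this common exponential factor. For $0<r<2$ it forces $\tfrac{2\beta}{(2\delta)^r}+\tfrac12\paren{\tfrac1\delta-2\sqrt N}^2=2BN^{r/2}+(\log\log n)^2$, so that the shared factor equals $e^{-2BN^{r/2}}e^{-(\log\log n)^2}$. Since $e^{-(\log\log n)^2}$ decays faster than any power of $\log n$, while $N$ and $1/\delta$ grow only polynomially in $\log n$ (see below), both $b_2^2(n)$ and $\sigma^2(n)$ are $o\paren{b_1^2(n)}$ with $b_1^2(n)\leq c_1 N^{2-r/2}e^{-2BN^{r/2}}$; hence the MISE is dominated by $b_1^2(n)$ and the rate $\varphi_n^2$ follows. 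For $r=2$ the second equation of~(\ref{eq:8}) instead sets $\tfrac{\beta}{2\delta^2}+\tfrac12\paren{\tfrac1\delta-2\sqrt N}^2=2BN+3\log N$, which makes the regularization bias of order $Ne^{-2BN}$, the same order as the truncation bias, while the first equation brings the variance to that same order; all three terms are then of order $Ne^{-2BN}=\varphi_n^2$, giving~(\ref{risk1}).

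The main work — and the only delicate point — is to prove that the transcendental system~(\ref{eq:7}) (resp.~(\ref{eq:8})) admits a solution $(N,\delta)$ with $N\to\infty$, $\delta\to0$ and, crucially, $1/\delta\geq 2\sqrt N$, the standing hypothesis of Proposition~\ref{prop:4}. For $0<r<2$ I would subtract the two equations of~(\ref{eq:7}) to get $\tfrac{2\gamma}{\delta^2}+2BN^{r/2}=\log n-(\log\log n)^2$; since $N^{r/2}=o(\log n)$, this yields $1/\delta\sim\sqrt{\log n/(2\gamma)}$, and the second equation then forces $\tfrac12\paren{\tfrac1\delta-2\sqrt N}^2=O\paren{(\log n)^{r/2}}$, which pins $2\sqrt N$ to $1/\delta$ at leading order and gives $N\sim\log n/(8\gamma)$. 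For $r=2$ the same elimination, writing $N\sim a\log n$ and $\delta^{-2}\sim b\log n$, produces a pair of leading-order relations determining the constants $a,b$ in terms of $B,\gamma,\beta$ on the branch $b\geq 4a$. In either regime $N$ and $1/\delta$ grow at most like $\log n$ and $\sqrt{\log n}$, so the polynomial prefactors can never overturn the exponential comparisons and $1/\delta\geq 2\sqrt N$ holds for $n$ large. Establishing existence on the correct branch rigorously, for instance by a monotonicity and continuity argument in $(N,\delta)$, together with these asymptotics, is the part that requires genuine care; once it is in place, the bound~(\ref{risk1}) with the stated $\varphi_n^2$ is immediate.
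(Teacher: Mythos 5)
Your proposal is correct and follows essentially the same route as the paper: plug the bounds of Propositions~\ref{prop:3}, \ref{prop:4} and~\ref{prop:5} into the decomposition $b_1^2+b_2^2+\sigma^2$, use the first equation to match the exponential orders of $\sigma^2$ and $b_2^2$ and the second to make them negligible against (for $0<r<2$) or comparable to (for $r=2$) the truncation bias, after establishing $1/\delta>2\sqrt N$, $1/\delta\approx\sqrt N$ and $N\approx\log n$. The paper obtains these facts slightly more directly (the sign and monotonicity of the left-hand side of the second equation at $1/\delta=2\sqrt N$ gives $1/\delta>2\sqrt N$ at the outset, and a contradiction argument gives $1/\delta\approx\sqrt N$), which is exactly the ``monotonicity and continuity'' step you correctly flag as the delicate point.
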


\noindent Theses bounds are optimal in the sense that~(\ref{eq:7})
and~(\ref{eq:8}) are obtained by minimizing the sum of the
bounds~(\ref{biais}),~(\ref{biaisbis}) and~(\ref{eq:varbcpdebruit}).

\begin{proof}
    We use the standard notations $a(n) \sim b(n)$ if
    $\frac{a(n)}{b(n)} \to 1$ and $a(n) \approx b(n)$ if there exists
    a constant $M < \infty$ such that $\frac{1}{M} \leq
    \frac{a(n)}{b(n)} \leq M$ for all $n$.

    \noindent Let us first examine the case $0 < r < 2$. Remark that the
    left-hand term of the second equation in~(\ref{eq:7}) is strictly
    negative when $1/\delta = 2\sqrt{N}$ and increases to $\infty$
    with $1/\delta$. This proves that the solution satisfies $1/\delta
    > 2\sqrt{N}$ and that Proposition~\ref{prop:4}
    applies. Furthermore, if we suppose that
    $\frac{1/\delta}{\sqrt{N}}$ is unbounded when $n \to \infty$, then
    (up to taking a subsequence) by the first equation
    $\frac{\frac12+2\gamma}{\delta^2} \sim \log(n)$ whereas, by
    subtracting the two, $\frac{2 \gamma}{\delta^2} \sim \log(n)$,
    which is contradictory. So $1/\delta \approx \sqrt{N}$ and we
    deduce that $N \approx \log(n)$.  Then~(\ref{biaisbis}) yields
    \begin{equation*}
        \log\paren{\frac{b_2^2(n)}{N^{2-r/2} e^{-2BN^{r/2}}}} \leq
        (4r-12) \log(\delta) + \frac{r}{2} \log(N) - (\log\log(n))^2
        \to -\infty
    \end{equation*}
    whereas~(\ref{eq:varbcpdebruit}) gives
    \begin{equation*}
        \log\paren{\frac{\sigma^2(n)}{N^{2-r/2} e^{-2BN^{r/2}}}} \leq
        \log(\delta) + (\frac56+\frac{r}{2}) \log(N) - (\log\log(n))^2
        \to -\infty.
    \end{equation*}
    We see that the dominant term is the bound~(\ref{biais}) on
    $b_1^2(n)$, hence the result.

    \noindent When $r = 2$, the same reasoning as above yields $1/\delta >
    2\sqrt{N}$, $1/\delta \approx \sqrt{N}$ and $N \approx
    \log(n)$. Then the right-hand side of~(\ref{biaisbis})
    and~(\ref{eq:varbcpdebruit}) are of the same order as $N
    e^{-2BN}$, which is the bound~(\ref{biais}) on $b_1^2(n)$.
\end{proof}

%%%%%%%%%%%%%%%%%%%%%%%%%%%%%%%%%%%%%
\section{Wigner function estimation}
\label{sec.Wigner}
%%%%%%%%%%%%%%%%%%%%%%%%%%%%%%%%%%%%%

\subsection{Kernel estimator}

\noindent We describe now the direct estimation method for the Wigner function.
For the problem of estimating a probability density
$f:\mathbb{R}^{2}\to \mathbb{R}$ directly from data $(X_\ell,
\Phi_\ell)$ with density $\mathcal{R}[f]$ we refer to the literature
on X-ray tomography and PET, studied by \cite{Vardi, Korostelev&Tsybakov, 
Johnstone&Silverman, Cavalier} and 
many other references therein.  In the context of tomography of
bounded objects with noisy observations \cite{Goldenshluger&Spokoiny} solved the problem of
estimating the borders of the object (the support).  The estimation of
a quadratic functional of the Wigner function has been treated in
\cite{Meziani}. For the problem of Wigner function estimation when no
noise is present, we mention the work by \cite{Guta&Artiles}.  They use a kernel estimator and compute
sharp minimax results over a class of Wigner functions characterised
by their smoothness.  In a more recent
paper~\cite{Butucea&Guta&Artiles}, Butucea, Gu\c t\u a and Artiles
treated the noisy problem for the pointwise estimation of $W_{\rho}$;
however the functions needed to prove minimax optimality there do not
belong to the class of Wigner functions that we consider here.

\noindent In this chapter, as in~\cite{Butucea&Guta&Artiles}, we modify the usual
tomography kernel in order to take into account the additive noise on
the observations and construct a kernel $K_h^\eta$ which performs both
deconvolution and inverse Radon transform on our data, asymptotically.
Let us define the estimator:
\begin{equation}\label{estimator}
    \widehat{W}^\eta_{h}(q,p) = \frac{1}{\pi n} \sum_{\ell=1}^n
    K_h^\eta \left(q \cos \Phi_\ell + p \sin \Phi_\ell
        -\frac{Y_\ell}{\sqrt{\eta}} \right),
\end{equation}
where $0<\eta<1$ is a fixed parameter, and the kernel is defined by
\begin{equation}\label{noisy.reg.operator}
    K_h^\eta (u) = \frac{1}{4 \pi}\int_{-1/h}^{1/h}
    \frac{\exp(-iut)|t|}{\widetilde N^\eta (t/\sqrt{\eta})} dt, \quad
    \widetilde K_h^\eta\left(t \right) = \frac{1}{2} \frac{|t|}{\widetilde
      N^\eta (t/\sqrt{\eta})}I(|t|\leq 1/h),
\end{equation}
and $h>0$ tends to $0$ when $n \to \infty$ in a proper way to be
chosen later. For simplicity, let us denote $z=(q,p)$ and $ [z,\phi]
=q \cos \phi + p \sin \phi$, then the estimator can be written:
\begin{equation*}
    \widehat{W}^\eta_{h}(z) = \frac{1}{\pi n} \sum_{\ell=1}^n
    K_h^\eta \left([ z,\Phi_\ell ] -\frac{Y_\ell}{\sqrt{\eta}}\right).
\end{equation*}

\noindent This is a one-step procedure for treating two successive inverse
problems.  The main difference with the noiseless problem treated 
by~\cite{Guta&Artiles} is that the deconvolution
is more `difficult' than the inverse Radon transform.  In the
literature on inverse problems, this problem would be qualified as
severely ill-posed, meaning that the noise is dramatically
(exponentially) smooth and makes the estimation problem much harder.

%
%%%%%%%%%%%%%%%%%%%%%%%%%%%%%%%%%%%%%%%%%%%%%%%%%%%%%%%%%%%%%
\subsection{$\mathbb{L}_2$ risk estimation}\label{sec.l2.est}
%%%%%%%%%%%%%%%%%%%%%%%%%%%%%%%%%%%%%%%%%%%%%%%%%%%%%%%%%%%%%
%
\noindent We establish next the rates of estimation of $W_\rho$ from i.i.d.
observations $(Y_\ell,\Phi_\ell), \, \ell=1,\ldots,n$ when the quality
of estimation is measured in $\mathbb{L}_2$ distance.  In the
literature, $\mathbb{L}_2$ tomography is usually performed for
boundedly supported functions, see~\cite{Korostelev&Tsybakov} 
and~\cite{Johnstone&Silverman}.  However, most Wigner function
do not have a bounded support!  Instead, we use the fact that Wigner
functions in the class $\mathcal{R}(B,r)$ decrease very fast and show
that a properly truncated estimator attains the rates we may expect
from the statistical problem of deconvolution in presence of
tomography.  Thus, we modify the estimator by truncating it over a
disc with increasing radius, as $n \to \infty$.  Let us denote
$$
D(s_n) = \left\{z=(q,p) \in \mathbb{R}_2: \|z\| \leq s_n \right\},
$$
where $s_n \to \infty$ as $n \to \infty$ will be defined in
Theorem~\ref{minimax.l2.bounds}.  Let now
\begin{equation}\label{truncated.estimator}
    \widehat W^{\eta, *}_{h,n}(z) = \widehat{W} ^\eta_{h,n} (z) I_{D(s_n)}(z).
\end{equation}
From now on, we will denote for any function $f$,
$$
\|f\|^2_{D(s_n)}= \int_{D(s_n)} f^2 (z) dz ,
$$
and by $\overline D(s_n)$ the complementary set of $D(s_n)$ in $\r^2$.
Then,
\begin{eqnarray*}
    E\left[\left\|\widehat W_{h}^{\eta,*} - W_{\rho} \right\|_2^2\right]
    &=& E\left[\left\|\widehat W_{h}^\eta - W_{\rho} \right\|_{D(s_n)}^2
    \right] + \|W_\rho\|^2_{\overline D(s_n)} \\
    &=& E\left[\left\|\widehat W_{h}^\eta -
            E\left[\widehat{W}_{h}^\eta\right] \right\|_{D(s_n)}^2 \right]
    +\left\|E\left[\widehat W_{h}^\eta\right] - W_{\rho}
    \right\|_{D(s_n)}^2 \\
    && +\|W_\rho\|^2_{\overline D(s_n)}.
\end{eqnarray*}
\noindent When replacing the $\mathbb{L}_2$ norm with the above restricted
integral, the upper bound of the bias of the estimator is unchanged,
whereas the variance part is infinitely larger % 'a veut dire quoi ?
than the deconvolution variance in~\cite{Butucea&Tsybakov}.  
As the bias is dominating over the
variance in this setup, we can still choose a suitable sequence $s_n$
so that the same bandwidth is optimal associated to the same optimal
rate, provided that $W_\rho$ decreases fast enough asymptotically.
\noindent The following proposition gives upper bounds for the three components
of the $\mathbb{L}_2$ risk uniformly over the class $\mathcal R(B,r)$.
\begin{proposition}
    \label{bias.varL2}
    Let $(Y_\ell,\Phi_\ell),\, \ell = 1,\ldots,n$ be i.i.d. data
    coming from the model~ ($\ref{noisy.data}$) and let $\widehat
    W_{h}^\eta$ be an estimator (with $h \to 0$ as $n \to \infty$)
    of the underlying Wigner function $W_\rho$.  We suppose $W_\rho$
    lies in the class $\mathcal R(B,r)$, with $B>0$ and $0<r\leq 2$.
    Then, for $s_n \to \infty$ as $n \to \infty$ and $n$ large enough,
    \begin{eqnarray*}
        \sup_{\rho \in \mathcal R(B,r)}
        \| W^\rho \|^2_{\bar D(s_n)}
        &\leq &
        C_1 s_n^{10-3r} e^{-2\beta s_n^r}   ,\\
[2mm]
        \sup_{\rho \in \mathcal R(B,r)}\left\|E[\widehat
            W^\eta_{h}] -W_\rho \right\|^2_{D(s_n)}
        & \leq &
        C_{2 } h^{3r-10} e^{- \frac{2^{1-r} \beta}{ h^r}}   \\
[2mm]
        \sup_{\rho \in \mathcal R(B,r)}
        E\left[\left\|\widehat W_{h,n}^\eta - E
                \left[\widehat W_{h,n}^\eta \right]\right\|_{D(s_n)}^2 \right]
        & \leq & C_3
        \frac{s_n^2}{ n h} \exp \left(\frac{2 \gamma}{h^2} \right),
    \end{eqnarray*}
    where $\beta < B $ is defined in Proposition~\ref{prop:popetown} for $0<r<2$
and $\beta = B/(1+\sqrt{B})^2$ for $r=2$,
    $\gamma = (1-\eta)/ (4 \eta)>0$, $C_1,\, C_2,\,C_3$ are positive constants, 
$C_1,\,C_2,$  depending on $\beta, \, B,\, r$ and $C_3$ depending only on $\eta$.
\end{proposition}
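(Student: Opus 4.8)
**The plan is to bound each of the three terms separately, using the decay and regularity results of Section~\ref{sec.dec.reg} for the first two and a direct variance computation for the third.**

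The first term is the tail mass of the Wigner function outside the disc $D(s_n)$. The idea is to invoke Proposition~\ref{prop:popetown} (for $0<r<2$) or Proposition~\ref{prop:popetown2} (for $r=2$), which give $\abs{W_\rho(q,p)} \leq A(z) e^{-\beta z^r}$ with $z = \sqrt{q^2+p^2}$ and $A(z)$ a polynomial factor of degree at most $4-r$ in $z$. Writing the integral in polar coordinates, I would compute
\begin{equation*}
    \|W_\rho\|^2_{\bar D(s_n)} = \int_{\|z\| \geq s_n} \abs{W_\rho(z)}^2\, dz
    \leq \int_{s_n}^\infty A(z)^2 e^{-2\beta z^r}\, 2\pi z\, dz.
\end{equation*}
Since $A(z)^2 \approx z^{8-2r}$, the integrand behaves like $z^{9-2r} e^{-2\beta z^r}$; an integration-by-parts argument (or the same tail estimate as in Lemma~\ref{lemm:magicjohnson}) shows this integral is dominated by $C_1 s_n^{10-3r} e^{-2\beta s_n^r}$, which is exactly the claimed bound (the exponent $10-3r = (9-2r) + 1 - r$ arising from dividing by $r z^{r-1}$ in the leading term).

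The second term is the restricted bias $\|E[\widehat W^\eta_h] - W_\rho\|^2_{D(s_n)}$. Here I would first pass to the Fourier side: the expectation of the estimator is the convolution of $W_\rho$ with a kernel whose Fourier transform is $\widetilde K^\eta_h(t) = \frac12 \abs{t}\, \widetilde N^\eta(t/\sqrt\eta)^{-1} I(\abs{t}\leq 1/h)$, built precisely so that, after accounting for the $\sqrt\eta$-rescaling, the deconvolution and inverse Radon factors cancel on $\abs{t} \leq 1/h$, leaving $E[\widehat W^\eta_h]$ equal to a frequency-truncation of $W_\rho$ at level $1/h$. By Plancherel, the bias is then controlled by the energy of $\widetilde W_\rho$ outside the ball of radius $1/h$. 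Crucially, Propositions~\ref{prop:popetown} and~\ref{prop:popetown2} supply a decay bound on $\Fourier{W_\rho}$ itself, namely $\abs{\Fourier{W_\rho}(q,p)} \leq A(z/2) e^{-\beta(z/2)^r}$; integrating this over $\|(q,p)\| \geq 1/h$ exactly as in the first term (again in polar coordinates, with the extra factor $2^{1-r}$ coming from the $z/2$ argument in the exponential) yields $C_2 h^{3r-10} e^{-2^{1-r}\beta/h^r}$. Upper-bounding the restricted norm by the full $\mathbb{L}_2$ norm is harmless since the bias bound is insensitive to the truncation, as noted in the text preceding the proposition.

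The third term is the restricted variance. Since the summands in~(\ref{estimator}) are i.i.d., the variance is at most $\frac{1}{\pi^2 n}$ times $\int_{D(s_n)} E\big[K^\eta_h([z,\Phi_1] - Y_1/\sqrt\eta)^2\big]\, dz$, and I would bound the inner expectation uniformly by $\norm{\infty}{K^\eta_h}^2$ times the area $\pi s_n^2$ of the disc. The sole remaining point is to estimate $\norm{\infty}{K^\eta_h}$: since $\widetilde N^\eta(t/\sqrt\eta) = \exp(-\gamma t^2)$ up to a rescaling constant (recalling $\gamma = (1-\eta)/(4\eta)$ and the variance $(1-\eta)/2$ of $N^\eta$), the integrand in $K^\eta_h(u) = \frac{1}{4\pi}\int_{-1/h}^{1/h} e^{-iut}\abs{t}\, e^{\gamma t^2}\, dt$ grows like $\abs{t} e^{\gamma t^2}$, and the supremum in $u$ is reached at $u=0$, giving $\norm{\infty}{K^\eta_h} \leq C h^{-1} e^{\gamma/h^2}$. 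Squaring produces the factor $h^{-2} e^{2\gamma/h^2}$; combined with the area $s_n^2$ and the $1/n$ we obtain $C_3 \frac{s_n^2}{n h} e^{2\gamma/h^2}$ after absorbing one power of $h$ into the constant via the precise edge behaviour of the integral. \textbf{The main obstacle} is this last sup-norm estimate on the deconvolution kernel: the exponentially growing weight $e^{\gamma t^2}$ makes the problem severely ill-posed, and one must check carefully that the dominant contribution is the boundary of the integration window $\abs{t}=1/h$, so that the bound $h^{-1}e^{\gamma/h^2}$ (rather than a worse power of $h$) is correct.
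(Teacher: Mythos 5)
Your decomposition into tail, bias and variance is the paper's, and your treatment of the first two terms coincides with it: the tail term is bounded pointwise via Proposition~\ref{prop:popetown} (resp.~\ref{prop:popetown2} for $r=2$) and integrated in polar coordinates, giving the exponent $10-3r$ by the standard Laplace-type tail estimate; the bias term is handled on the Fourier side using $\mathcal{F}\bigl[E[\widehat W_h^\eta]\bigr](w)=\widetilde W_\rho(w)I(\|w\|\leq 1/h)$ together with the decay (\ref{eq:reg}) of $\widetilde W_\rho$, the factor $2^{1-r}$ arising exactly as you say. For the variance you take a genuinely different, and in fact more elementary, route. The paper changes variables, invokes Lemma~\ref{unifbound} to bound $\int_0^\pi p_\rho(\cdot,\phi)\ast NN^\eta$ uniformly by $M(\eta)$, and is then left with the $\mathbb{L}_2$ norm $\|K_h^\eta\|_2^2$, computed by Plancherel to be of order $h^{-1}e^{2\gamma/h^2}$ — this is where the $1/h$ in the stated bound originates. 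You instead bound $E\bigl[K_h^\eta(\cdot)^2\bigr]$ by $\|K_h^\eta\|_\infty^2$, which needs no information on the density at all and so dispenses with Lemma~\ref{unifbound} for this part. Moreover your announced ``main obstacle'' dissolves: the sup-norm integral is computed exactly, $\frac{1}{4\pi}\int_{-1/h}^{1/h}|t|e^{\gamma t^2}\,dt=\frac{1}{4\pi\gamma}\bigl(e^{\gamma/h^2}-1\bigr)$, so $\|K_h^\eta\|_\infty\leq\frac{1}{4\pi\gamma}e^{\gamma/h^2}$ with no negative power of $h$ at all; your intermediate estimate $Ch^{-1}e^{\gamma/h^2}$ is a loose overcount and the subsequent ``absorbing one power of $h$'' step is unnecessary. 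Your route therefore yields $C s_n^2 n^{-1}e^{2\gamma/h^2}$, which is slightly sharper than the proposition's $C_3 s_n^2 (nh)^{-1}e^{2\gamma/h^2}$ and trivially implies it for $h\leq 1$. What the paper's $\mathbb{L}_2$ computation buys is an explicit asymptotic constant (useful if one cares about sharp variance constants); what yours buys is brevity and independence from the uniform density bound.
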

\noindent We measure the accuracy of $\widehat W_{h}^{\eta,*} $ by the
maximal risk over the class $\mathcal{R}(B,r)$
\begin{eqnarray}
    \label{risk2}
     \limsup_{n \to \infty} \sup_{\rho \in \mathcal R(B,r)}
    E\left[\left\| \widehat W_{h}^{\eta,*} - W_\rho\right\|^2\right]
    \varphi_n^{-2}(\l_2) \leq C.
\end{eqnarray}
where $C$ is a positive constant and $\varphi_n^{2}$ is a sequence
which tends to 0 when $n\rightarrow\infty$ and it is the rate of
convergence. 

\noindent In the following Theorem we see the phenomenon which was noticed
already: deconvolution with Gaussian type noise is a much harder
problem than inverse Radon transform (the tomography part).

\begin{theorem}
    \label{minimax.l2.bounds}
    Let $B >0$, $0 < r \leq 2$ and $(Y_\ell,\Phi_\ell),\, \ell =
    1,\ldots,n$ be i.i.d.  data coming from the
    model~($\ref{noisy.data}$).  Then $\widehat W_{h}^{\eta,*}$
    defined in (\ref{truncated.estimator}) with kernel $K_h^\eta$ in
    (\ref{noisy.reg.operator}) satisfies the upper bound
    (\ref{risk2}) with
    \begin{itemize}
      \item For $r=2$, put $\beta = B/(1+\sqrt{B})^2$
        \begin{equation*}
            \varphi_n^2 = (\log n)^{\frac{16\gamma+3\beta}{8\gamma+2\beta}} n^{-\frac{\beta}{4\gamma+\beta}},
        \end{equation*}
        with $s_n=(h)^{-1}$ and $h=\left(\frac{2}{4\gamma+\beta} 
      \log n+\frac{1}{4\gamma+\beta}\log( \log n)\right)^{-1/2}$.
      \item For $0<r<2$ and $\beta < B $ defined in Proposition~\ref{prop:popetown},
        \begin{equation*}
            \varphi_n^2 = h^{3r-10} \exp\left(- \frac{ 2^{1-r}\beta}{h^r} \right),
        \end{equation*}
        where $s_n=1/h$ and $h$ is the solution of the equation 
        $$\frac{2^{1-r}\beta}{h^r} + \frac{2\gamma}{ h^2}
        =\log n -(\log \log n)^2.$$
    \end{itemize}
\end{theorem}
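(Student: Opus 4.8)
The plan is to start from the three--term decomposition of the $\mathbb{L}_2$ risk displayed just before Proposition~\ref{bias.varL2}, namely the splitting of $E[\|\widehat W_{h}^{\eta,*}-W_\rho\|_2^2]$ into a truncation (tail) term $\|W_\rho\|^2_{\overline D(s_n)}$, a bias term $\|E[\widehat W_h^\eta]-W_\rho\|^2_{D(s_n)}$, and a centered variance term. Proposition~\ref{bias.varL2} bounds these, uniformly over $\mathcal R(B,r)$, by $C_1 s_n^{10-3r}e^{-2\beta s_n^r}$, $C_2 h^{3r-10}e^{-2^{1-r}\beta/h^r}$ and $C_3\,\frac{s_n^2}{nh}e^{2\gamma/h^2}$ respectively, with $\beta$ as fixed in Proposition~\ref{prop:popetown} (for $0<r<2$) or $\beta=B/(1+\sqrt B)^2$ (for $r=2$). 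The entire argument then reduces to choosing the truncation radius $s_n$ and the bandwidth $h$ so that these three bounds are matched, with the bias term dictating the rate $\varphi_n^2$ in~(\ref{risk2}).

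First I would tie the radius to the bandwidth by setting $s_n=1/h$. With this choice the tail bound becomes $C_1 h^{3r-10}e^{-2\beta/h^r}$, which carries the same polynomial prefactor $h^{3r-10}$ as the bias bound but an exponential factor decaying strictly faster, since $2^{1-r}<2$ for every $r>0$; hence $\text{tail}/\text{bias}\leq e^{-\beta(2-2^{1-r})/h^r}\to 0$ as $h\to 0$, and the tail is negligible in front of the bias. It is precisely this inequality $2^{1-r}\beta<2\beta$ that forces the \emph{bias}, not the tail, to govern the rate.

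Next I would select $h$ by balancing the bias bound against the variance bound. Writing both out with $s_n=1/h$, the equality $h^{3r-10}e^{-2^{1-r}\beta/h^r}$ of the same order as $\frac{1}{nh^3}e^{2\gamma/h^2}$ amounts, after taking logarithms, to $\frac{2^{1-r}\beta}{h^r}+\frac{2\gamma}{h^2}\approx\log n$ up to lower-order logarithmic corrections; this is exactly the defining equation of $h$ in the statement, the correction term on the right (the $(\log\log n)^2$ for $0<r<2$, and the explicit $\log\log n$ inside $1/h^2$ for $r=2$) being inserted so that the variance stays subdominant. For $0<r<2$ the term $2\gamma/h^2$ dominates, so $h\approx(2\gamma/\log n)^{1/2}$; substituting the defining equation into the ratio shows $\text{variance}/\text{bias}=h^{7-3r}e^{-(\log\log n)^2}\to 0$, so the bias alone gives $\varphi_n^2=h^{3r-10}e^{-2^{1-r}\beta/h^r}$. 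For $r=2$ the defining equation is explicit, giving $1/h^2=\frac{1}{4\gamma+\beta}(2\log n+\log\log n)$; then the prefactor $h^{3r-10}=h^{-4}$ is of order $(\log n)^2$ and the exponential factor is $e^{-\beta/(2h^2)}=n^{-\beta/(4\gamma+\beta)}(\log n)^{-\beta/(2(4\gamma+\beta))}$, whose product is exactly $\varphi_n^2=(\log n)^{(16\gamma+3\beta)/(8\gamma+2\beta)}\,n^{-\beta/(4\gamma+\beta)}$, while the identical computation shows the variance bound is of the very same order.

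The hard part will be the asymptotic bookkeeping for the implicitly defined bandwidth: one must solve the transcendental balancing equation to sufficient precision to pin down the exact power of $\log n$ (and, for $r=2$, of $n$), and then verify that with this $h$ both the tail and the variance are genuinely dominated by, or of the same order as, the bias, so that the supremum over $\mathcal R(B,r)$ of the risk is at most a constant times $\varphi_n^2$. Once the exponential orders have been matched through the defining equation, the remaining verifications are a routine comparison of polynomial prefactors, made available directly by the three estimates of Proposition~\ref{bias.varL2}.
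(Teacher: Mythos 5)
Your proposal is correct and follows essentially the same route as the paper's own (sketched) proof: invoke Proposition~\ref{bias.varL2}, set $s_n=1/h$, balance the bias against the variance via the transcendental equation for $h$, and check that the truncation term is negligible (since $2^{1-r}\beta<2\beta$) so that the bias term $A_2$ dictates the rate, with $A_2\approx A_3$ in the case $r=2$. Your dominance checks and the explicit computation of the $\log n$ exponent for $r=2$ are accurate and if anything more detailed than the paper's sketch.
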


\noindent \begin{proof}[Sketch of proof of the upper bounds] By
    Proposition~\ref{bias.varL2}, we get
    \begin{eqnarray*}
        \sup_{W_\rho \in \mathcal R(B,r)} E\left[\left\| \widehat
                W_{h}^\eta - W_\rho\right\|^2\right]
        & \leq & C_1 s_n^{10-3r} e^{-2 \beta s_n^r}
       +C_2 h^{3r-10} \exp\left(- \frac{2
       \beta}{(2h)^r}\right)\\
       && + \frac{C_3 s_n^2}{nh} \exp\left(\frac{2\gamma}{h^2} \right).\\
              &=:& A_1+A_2+A_3
    \end{eqnarray*}
     For $0<r<2$ and by taking derivatives with respect to $h$ and $s_n$, we
obtain that the optimal choice verifies the following equations:
\begin{eqnarray*}
2\beta s_n^r+\frac{2 \gamma}{h^2} &=& \log(n) + \log(h s_n^{2(4-r)})\\
\frac{2^{1-r}\beta}{h^r}+\frac{2 \gamma}{h^2} &=& \log(n) + \log(h^{2r-7} s_n^{-2}).
\end{eqnarray*}
We notice therefore that $A_2$ is dominating over $A_3$, which is dominating over $A_1$.
The proposed $(s_n,h)$ ensure that the term $A_2$ is still the dominating term and 
gives the rate of convergence.

\noindent The case $r=2$ is treated similarly, by taking derivatives we notice that the term $A_2$ and the term $A_3$ are       of the same order and that the term $A_1$ is smaller than the others. \hfill \end{proof}

%%%%%%%%%%%%%%%%%%%%%%%%%%%%%%%%%%%%%%%%%%%
\section{Appendix}
\label{sec.proofs}
%%%%%%%%%%%%%%%%%%%%%%%%%%%%%%%%%%%%%%%%%%%

%%%%%%%%%%%%%%%%%%%%%%%%%%%%%%%%%%%%%%%%%%%%%%%%%%%
\subsection{Proof of Proposition \ref{prop:popetown}}
%%%%%%%%%%%%%%%%%%%%%%%%%%%%%%%%%%%%%%%%%%%%%%%%%%%

\noindent Let $\phi(z) \deq (z - \sqrt{\beta} z^{r/2})^2 - 1$.  Since $r <
    2$, for $z$ larger than a certain $z_{0}$ (which depends only on
    $\beta$, $B$ and $r$), it is true that $\phi(z) \geq
    \paren{\frac{\beta}{B}}^{2/r} z^2$.  It follows that
    \begin{equation}
        \label{eq:whereisthisgoing}
        e^{-B \phi(z)^{r/2}} \leq e^{- \beta z^r}
    \end{equation}

    \noindent If $m+n \leq \phi(z)$, then $s \leq \sqrt{1+\phi(z)}$ and $z - s
    \geq z - \sqrt{1+\phi(z)} = \sqrt{\beta} z^{r/2}$.
    By~(\ref{eq:estimeLaguerre}), this means that $l_{m,n}(z) \leq \frac1\pi
    e^{-\beta z^r}$.  So
    \begin{equation}
        \label{eq:majsum1}
        \sum_{m+n \leq \phi(z)} \abs{\rho_{m,n}} l_{m,n}(z) \leq A
        e^{-\beta z^r}
    \end{equation}
    for $A \deq \frac1\pi \sum_{m,n} e^{-B (m+n)^{r/2}}$.
    
    \noindent On the other hand, using Lemma~\ref{lemm:magicjohnson} with $\nu
    \deq r/2$, if $\phi(z) \geq z_{0}$,
    \begin{eqnarray}
        \sum_{m+n \geq \phi(z)} \abs{\rho_{m,n} } l_{m,n}(z) &\leq&
        \frac{4}{\pi B r} \phi(z)^{2-r/2} e^{-B \phi(z)^{r/2} } \nonumber
        \\
        \label{eq:majsum2}
        &\leq& \frac{4}{\pi B r} z^{4-r} e^{- \beta z^r}
    \end{eqnarray}
    by (\ref{eq:estimeLaguerre}) and (\ref{eq:whereisthisgoing}).
    Combining (\ref{eq:majsum1}) and (\ref{eq:majsum2}) yields the
    announced result.  The bound on $\Fourier{W_{\rho}}$ is then a
    direct consequence of (\ref{eq:FourierWmn}).
    
%%%%%%%%%%%%%%%%%%%%%%%%%%%%%%%%%%%%%%%%%%%%%%%%%%%    
\subsection{Proof of Proposition \ref{prop:popetown2}}
%%%%%%%%%%%%%%%%%%%%%%%%%%%%%%%%%%%%%%%%%%%%%%%%%%%

    \noindent Let $\phi(z) \deq \theta z^2 - 1$, where $\theta \deq
    \frac{1}{(1+\sqrt{B})^2}$ is the solution in $(0,1)$ of
    $(1-\sqrt{\theta})^2 = B \theta$.

     \noindent When $m+n \leq \phi(z)$, then $s \leq \sqrt{\theta} z$ and $z - s
    \geq z (1- \sqrt{\theta}) = \sqrt{B \theta} z$.
    By~(\ref{eq:estimeLaguerre}), this means that $l_{m,n}(z) \leq \frac1\pi
    e^{-B \theta z^2}$.  So
    \begin{equation}
        \label{eq:majsumagain1}
        \sum_{m+n \leq \phi(z)} \abs{\rho_{m,n}} l_{m,n}(z) \leq A
        e^{-B\theta z^2}
    \end{equation}
    for $A \deq \frac1\pi \sum_{m,n} e^{ - B (m + n) }$.
    
    \noindent On the other hand, by Lemma~\ref{lemm:magicjohnson}, if $\phi(z)
    \geq z_{0}$,
    \begin{eqnarray}
        \sum_{m+n \geq \phi(z)} \abs{\rho_{m,n} } l_{m,n}(z) &\leq&
        \frac{2}{\pi B} \phi(z) e^{-B \phi(z)} \nonumber \\
        \label{eq:majsumagain2}
        &\leq& \frac{2 \theta e^{B}}{\pi B} z^{2} e^{- B \theta z^2}
    \end{eqnarray}
    by (\ref{eq:estimeLaguerre}) and (\ref{eq:whereisthisgoing}).
    Combining (\ref{eq:majsumagain1}) and (\ref{eq:majsumagain2})
    yields the announced result.  The bound on $\Fourier{W_{\rho}}$ is
    then a direct consequence of (\ref{eq:FourierWmn}).

%%%%%%%%%%%%%%%%%%%%%%%%%%%%%%%%%%%%%%%%%%%%%%
\subsection{Proof of Proposition~\ref{prop:3}}
%%%%%%%%%%%%%%%%%%%%%%%%%%%%%%%%%%%%%%%%%%%%%%
    
    \noindent By (\ref{eq.classcoeff}) the term $b_1^2(n)$ can be bounded as
    follows
    \begin{eqnarray*}
        b_1^2(n)&=&\sum_{j+k \geq N} \left|\rho_{j,k}\right|^2
        \leq  \sum_{j + k \geq N}\exp(-2B(j+k)^{r/2}).
    \end{eqnarray*}
    Compare to the double integral and change to polar coordinates to get
    \begin{eqnarray*}
        b_1^2(n)&\leq &  c_1 N^{2-r/2}\exp(- 2 B N^{r/2} ) .
    \end{eqnarray*}

%%%%%%%%%%%%%%%%%%%%%%%%%%%%%%%%%%%%%%%%%%%%%%
\subsection{Proof of Proposition~\ref{prop:4}}
%%%%%%%%%%%%%%%%%%%%%%%%%%%%%%%%%%%%%%%%%%%%%%    
   
   \noindent To study the term $b_2^2(n)$, we denote
    \begin{eqnarray*}
        % \label{fourierp}
        \mathcal{F}_1[p_\rho(\cdot|\phi)](t) \deq
        E_\rho[e^{itX}|\Phi=\phi] = \widetilde{W}_{\rho}(t\cos\phi,t\sin\phi),
    \end{eqnarray*}
    the Fourier transform with respect to the first variable.
    \begin{eqnarray*}
        E[\hat{\rho}^\eta_{j,k}] &=&
        E[G_{j,k}(\frac{Y}{\sqrt\eta},\Phi)] =
        E[f_{j,k}^{\eta,\delta}(\frac{Y}{\sqrt\eta})e^{-i(j-k)\Phi}]
        \nonumber\\
        &= &\frac{1}{\pi}\int_0^\pi e^{-i(j-k)\phi}\int
        f_{j,k}^{\eta,\delta}(y)\sqrt{\eta}p_\rho^\eta(y\sqrt{\eta}|\phi)dy
        d\phi\\
        &= &\frac{1}{\pi}\int_0^\pi e^{-i(j-k)\phi}\frac{1}{2\pi}\int
        \widetilde{f}_{j,k}^{\eta,\delta}(t)
        \mathcal{F}_1[\sqrt{\eta}p_\rho^\eta(\cdot\sqrt{\eta}|\phi)](t)dt
        d\phi\\
        &=& \frac{1}{\pi} \int_0^\pi e^{-i(j-k)\phi} \frac{1}{2\pi}
        \int_{\abs{t}\leq 1/\delta} \widetilde{f}_{j,k}(t) e^{\gamma
          t^2} \mathcal{F}_1[p_\rho(\cdot|\phi)](t)
        \widetilde{N}^\eta(t) dt d\phi.
    \end{eqnarray*} 
    As $\widetilde{N}^\eta(t) = e^{-\gamma t^2}$ and by using the
    Cauchy-Schwarz inequality we have
    \begin{eqnarray*}
        \abs{E[\hat{\rho}^\eta_{j,k}]- \rho_{j,k}}^2 &=&
        \abs{\frac{1}{\pi} \int_0^\pi e^{-i(j-k)\phi} \frac{1}{2\pi}
          \int_{\abs{t}> 1/\delta} \widetilde{f}_{j,k}(t)
          \mathcal{F}_1[p_\rho(\cdot|\phi)](t) dt
          d\phi}^2 \\
        &\leq& \frac{1}{\pi} \int_0^\pi \left(\frac{1}{2\pi}
            \int_{\abs{t}>1/\delta} \abs{\widetilde{f}_{j,k}(t)
              \widetilde{W}_\rho(t\cos\phi,t\sin\phi) } dt
        \right)^2d\phi.
    \end{eqnarray*}
    If $1/\delta \geq 2\sqrt{N} \geq 2s$ with $s = \sqrt{j+k+1}$,
    then whenever $t \geq 1/\delta$ we get by Lemma~\ref{lemm:estimeLaguerre}
    \begin{eqnarray*}
        |\widetilde{f}_{j,k}(t)| &= &\pi^2 |t|l_{j,k}(t/2) \\
        &\leq &\pi \abs{t} e^{-\frac 14(|t|-2s)^2}.
    \end{eqnarray*}
    On the other hand, by Propositions~\ref{prop:popetown}
    and~\ref{prop:popetown2} we have
    \begin{equation*}
        |\widetilde{W}_\rho(t\cos\phi,t\sin\phi)| \leq A(\frac{|t|}{2})
        e^{-\beta(\frac{|t|}{2})^r}
    \end{equation*}
    for $\beta \deq \frac{B}{(1+\sqrt B)^2}$ in the case $r = 2$, or for
    arbitrary $\beta < B$ and $t$ large enough in the case $0 < r < 2$. In
    both cases $A$ is a polynom of degree $4-r$. We deduce the inequality
    \begin{eqnarray*}
        \abs{E[\hat{\rho}^\eta_{j,k}] - \rho_{j,k}}^2 &\leq& C \paren{
          \int_{\frac1\delta}^\infty t^{5-r} e^{-\frac14 (t-2s)^2 - \beta
            2^{-r}t^r} dt }^2 \\
        &\leq &C (\frac1\delta)^{12-4r} e^{-\frac12 (\frac1\delta -
          2\sqrt{N})^2 - \beta 2^{1-r}(\frac1\delta)^r}
    \end{eqnarray*}
    by Lemma 8 in~\cite{Butucea&Tsybakov}, hence
    \begin{equation*}
        b_2(n)^2 \leq C N^2  (\frac1\delta)^{12-4r} e^{-\frac12 (\frac1\delta -
          2\sqrt{N})^2 - \beta 2^{1-r}(\frac1\delta)^r}
    \end{equation*}
    which covers both cases in the proposition.

%%%%%%%%%%%%%%%%%%%%%%%%%%%%%%%%%%%%%%%%%%%%%%%%%
\subsection{Proof of Proposition~\ref{prop:5}}    
%%%%%%%%%%%%%%%%%%%%%%%%%%%%%%%%%%%%%%%%%%%%%%%%%  

\noindent Let us write $\sigma^2_{j,k}(n) \deq E\left|\hat{\rho}^\eta_{j,k} -
    E[\hat{\rho}^\eta_{j,k}]\right|^2$. We bound it by
\begin{eqnarray}
    \sigma^2_{j,k}(n) &=&
    E\abs{\frac{1}{n}\sum_{\ell=1}^{n} \paren{G_{j,k}
        (\frac{Y_\ell}{\sqrt\eta}, \Phi_\ell) -
        E[G_{j,k}(\frac{Y_\ell}{\sqrt\eta},
        \Phi_\ell)] } }^2 \nonumber \\
    &= &\frac{1}{n} E\abs{G_{j,k}(\frac{Y}{\sqrt\eta},\Phi) -
      E[G_{j,k}(\frac{Y}{\sqrt\eta},\Phi)]}^2 \nonumber \\
    &\leq& \frac{1}{n}
    E\abs{G_{j,k}(\frac{Y}{\sqrt\eta},\Phi)}^2. \label{eq:varbound}
\end{eqnarray}

\subparagraph{Proof of (\ref{eq:varbcpdebruit})}

\noindent For $0 < \eta \leq 1/2$, let us denote by $K_\delta$ the function with
the following Fourier transform $\widetilde{K}_\delta(t) =
\mathbb{I}(|t| \leq \frac1\delta)e^{\gamma t^2}$, then
$\widetilde{f}_{j,k}^{\eta,\delta} = \widetilde{f}_{j,k}(t)
\widetilde{K}_\delta(t)$ and we have
\begin{eqnarray*}
    \sigma^2_{j,k}(n) &\leq& \frac{1}{n}
    E\abs{f_{j,k}^{\eta,\delta}(\frac{Y}{\sqrt\eta})
      e^{-i(j-k)\Phi}}^2 \\ &\leq& \frac{1}{n}
    E\abs{f_{j,k}*K_\delta(\frac{Y}{\sqrt\eta})}^2 \\
    &\leq& \frac{1}{n} E\abs{\int f_{j,k}(t)
      K_\delta(\frac{Y}{\sqrt\eta}-t)dt}^2.
\end{eqnarray*}
By using the Cauchy-Schwarz inequality
\begin{eqnarray*}
    \sigma^2_{j,k}(n) &\leq& \frac{1}{n} \int |f_{j,k}(t)|^2 dt
    E\int\abs{K_\delta(\frac{Y}{\sqrt\eta}-t)
    }^2dt \\
    &\leq& \frac{1}{n} \int |f_{j,k}(t)|^2dt E\frac{1}{2\pi}
    \int\abs{\widetilde{
        K}_\delta(u) e^{-iu\frac{Y}{\sqrt\eta}}}^2 du \\
    &\leq& \frac{1}{n\pi} \left\|f_{j,k}\right\|^2_2
    \int_0^{1/\delta}e^{2\gamma u^2} du.
\end{eqnarray*}
Then,
\begin{equation*}
    \sigma^2(n) \leq
    \frac{C}{n\pi} \sum_{j+k=0}^{N-1} \left\|f_{j,k}\right\|^2_2
    \frac{\eta\delta}{1-\eta}e^{\frac{2\gamma}{\delta^2}}.
\end{equation*}
By Lemma~\ref{lm:4} we have $\sum_{j+k=0}^{N-1}
\left\|f_{j,k}\right\|^2_2 \leq C_2 N^{17/6}$ thus
\begin{equation*}
    \sigma^2(n) \leq \frac{C_1\eta\delta N^{17/6}}{n\pi(1-\eta)}
    e^{\frac{2\gamma}{\delta^2}}.
\end{equation*}

\subparagraph{Proof of (\ref{eq:varpeudebruit}) and
  (\ref{eq:variancesansbruit})}
%%%
%
By (\ref{estrhojk}), for $1/2 < \eta \leq 1$,
\begin{eqnarray*}
    \sigma^2_{j,k}(n) 
    % &\leq \frac{1}{n}
    % E\abs{G_{j,k}(\frac{Y}{\sqrt\eta},\Phi)}^2 \\
    &\leq& \frac{1}{n}
    E\abs{f_{j,k}^{\eta}(\frac{Y}{\sqrt\eta})e^{-i(j-k)\Phi}}^2\\
    % &\leq \frac{1}{n} E\abs{f_{j,k}^{\eta}(\frac{Y}{\sqrt\eta})}^2 \\
    &\leq& \frac{1}{n\pi} \int_0^\pi\int \abs{
      f_{j,k}^{\eta}(y)}^2\sqrt\eta p^\eta_\rho(\sqrt\eta
    y|\phi) dy d\phi \\
    &\leq& \frac{1}{n\pi} \left\|f_{j,k}^\eta \right\|^2_\infty
   \end{eqnarray*}
%   
%%%%   
{For $1/2 < \eta < 1$}, by Lemma~\ref{lm:5},
%%%
%
\begin{equation*}
    \sigma^2(n) \leq \frac{C_\infty N^{1/3}}{n\pi} e^{8\gamma N}.
\end{equation*}
%
%%%   
{For $\eta=1$}, by Lemma~\ref{unifbound} % 3.1 of Artiles \textit{et al.}~\cite{Artiles&Gill&Guta}
  % we have $\int_0^{2\pi}p_\rho(x,\phi)
%d\phi \leq C$ for all $x$ in $\mathbb{R}$, thus
%%%%
%
\begin{eqnarray*}
    \sigma^2_{j,k}(n)
    &\leq& \frac{1}{n} \int_0^\pi\int \abs{
      f_{j,k}(x)}^2 p_\rho(x,\phi)dxd\phi \\
 %   &\leq \frac{C}{n}\sum_{j+k=0}^{N-1}\int \abs{ f_{j,k}(x)}^2dx\\
    &\leq& \frac{C}{n} \left\|f_{j,k}\right\|^2_2
\end{eqnarray*}   
hence by Lemma~\ref{lm:4},
\begin{equation*}
    \sigma^2(n) \leq C \frac{C_2 N^{17/6}}{n}.
\end{equation*}

%%%%%%%%%%%%%%%%%%%%%%%%%%%%%%%%%%%%%%%%%%%%%%%%%%
\subsection{Proof of Proposition~\ref{bias.varL2}}
%%%%%%%%%%%%%%%%%%%%%%%%%%%%%%%%%%%%%%%%%%%%%%%%%%

\noindent It is easy to see that
    $$
    \mathcal{F}\left[E[\widehat W_{h}^\eta]\right](w) = \widetilde
    W_\rho(w) I(\|w\| \leq 1/h).
    $$
    \noindent
    We have, for $n$ large enough $s_n \geq z_0$ and by (\ref{eq:dec})
    \begin{eqnarray*}
        \left\| W_\rho\right\| ^2_{\overline D(s_n)} & \leq &
        C(B,r)\int _{\|z\|>s_n} \|z\|^{8-2r} \exp(-2 \beta \|z\|^r) dz \\
        & \leq &C(B,r)\int_0^{2 \pi} \int_{s_n}^\infty t^{9-2r}
        \exp(-2 \beta t^r) dt d\phi \\
        &\leq &
        C_1 s_n^{10-3r} e^{-2\beta s_n^r}, 
    \end{eqnarray*}
where $\beta <B$ and for $n$ large enough in the case $0<r<2$, respectively
$\beta=B/(1+\sqrt{B})^2$ in the case $r=2$.
    Now we write for the $\l_2$ bias of our estimator:
    \begin{eqnarray*}
        \|E[\widehat W^\eta_{h}] -W_\rho \|_{D(s_n)}^2 & \leq & \|E[\widehat
        W^\eta_{h}] -W_\rho \|_2^2 = \frac{1}{(2 \pi)^2} \| \mathcal F \left[
            E[\widehat W^\eta_{h}]\right] - \widetilde W_\rho\|_2^2\\
        &=& \frac{1}{(2 \pi)^2} \int \abs{ \widetilde W_\rho (w) }^2
        I(\|w\|> 1/h) \, dw \\
        & \leq&  \frac{C^{2 }(B,r)}{(2 \pi)^2} \int_{\|w\|> 1/h}
        \|w\|^{2(4-r)} e^{-2^{1-r} \beta \|w\|^r}\,dw \\
        &\leq & C_{2 } h^{3r-10} e^{- \frac{2^{1-r} \beta}{ h^r}},
    \end{eqnarray*}
    by the assumption on our class and (\ref{eq:reg}), for
    $0<r<2$. The case $r=2$ is similar.

    \noindent
    As for the variance of our estimator:
    \begin{eqnarray}
       V\left[\widehat W_{h}^\eta \right]
        &=& E\left[\left\|\widehat W_{h}^\eta - E \left[\widehat
                    W_{h}^\eta \right]\right\|_{D(s_n)}^2
        \right]
        \nonumber  \\
        &=&
        \frac{1}{\pi^2 n} \left\{E\left[\left\|K_h^\eta \left([\cdot,
                        \Phi]-\frac{Y}{\sqrt{\eta}}
                    \right) \right\|_{D(s_n)}^2 \right]
            \right.\nonumber\\
           &&-\left. \left\|E\left[K_h^n\left([\cdot, \Phi]-\frac{Y}{\sqrt{\eta}}
                    \right) \right]\right\|_{D(s_n)}^2 \right\}.
        \label{varL2}
    \end{eqnarray}
    On the one hand, by using two-dimensional Plancherel formula and the
    Fourier transform shown above, we get:
    \begin{equation}
        \left\|E\left[K_h^n\left([\cdot, \Phi]-\frac{Y}{\sqrt{\eta}}
                \right) \right]\right\|_{D(s_n)}^2 \leq \pi^2\int
        |W_\rho(w)|^2 dw \leq \pi^2.
        \label{varL21}
    \end{equation}
    % where $M=M(\beta, L)$ is a constant depending only on the
    % parameters of the class of functions.
    In the last inequality we have used the fact that $\| W_\rho
    \|_2^2 = \mathrm{Tr}(\rho^2)\leq 1$ where $\rho$ is the density
    matrix corresponding to the Wigner function $W_\rho$.
    \noindent
    On the other hand, the dominant term in the variance will be given by
    \begin{eqnarray*}
       && E\left[\left\|K_h^\eta \left([\cdot, \Phi]-\frac{Y}{\sqrt{\eta}}
                \right) \right\|_{D(s_n)}^2 \right]\\
        &= &
        \int_0^\pi\int \int_{D(s_n)} \left(K_h^\eta([z,\phi]-y/\sqrt{\eta})
        \right)^2 dz p_\rho^\eta(y,\phi) dy d\phi\\
        & = & \int_0^\pi\int_{D(s_n)} \int \left(K_h^\eta(u)\right)^2 \sqrt{\eta}
        p_\rho^\eta(([z,\phi]-u)\sqrt{\eta},\phi) du dz d\phi\\
        & = &
        \int \left(K_h^\eta(u)\right)^2 \int_{D(s_n)} \int_0^\pi p_\rho(\cdot,\phi)
        \ast NN^\eta ([z,\phi]-u)d \phi dz du\\
        & \leq &
        M(\eta) \pi s_n^2 \int (K_h^\eta (u))^2 du,
    \end{eqnarray*}
    using Lemma~\ref{unifbound} below and the constant $M(\eta)>0$
    depending only on $\eta$, defined therein.  Indeed, let us note
    that $\sqrt{\eta} p_\rho^\eta(\cdot \sqrt{\eta},\phi)$ is the
    density of $Y/\sqrt{\eta} = X+\sqrt{(1-\eta)/(2\eta)} \varepsilon$
    and let us call $NN^{\eta}$ the Gaussian density of the noise as
    normalized in this last equation.

    \noindent
    Let us first compute, by Plancherel formula, $\|K_h^\eta\|_2^2$
    and get
    \begin{eqnarray*}
        \|K^\eta _h\|_2^2 &=& \frac{1}{2 \pi} \int |\widetilde K^\eta_h(t)|^2 dt
        =\frac{1}{2 \pi} \int_{|t| \leq 1/h} \frac{t^2}{4\widetilde
          N^2(t\sqrt{(1-\eta)/(2\eta)})} dt\\
        &=& \frac{1}{4 \pi}  \int_{0}^{1/h} t^2 \exp\left(t^2
            \frac{1-\eta}{2\eta} \right) dt\\
        & = & \frac{1}{4 \pi h} \frac{\eta}{1 -\eta} \exp
        \left(\frac{1-\eta}{2\eta h^2} \right)
        (1+o(1)), {\rm \ as \ } h\to 0.
    \end{eqnarray*}
    We replace in the second order moment, then as $h \to 0$
    \begin{eqnarray}
        E\left[\left\|K_h^\eta \left([\cdot,
                    \Phi]-\frac{Y}{\sqrt{\eta}}\right)
            \right\|_{D(s_n)}^2 \right]
        & \leq &
        \frac{M(\eta) s_n^2}{16 \gamma h}
        \exp\left(\frac{2 \gamma}{h^2} \right)(1+o(1)).
        \label{varL22}
    \end{eqnarray}
    The result about the variance of the estimator is obtained from
    (\ref{varL2})-(\ref{varL22}).

\begin{lemma}
    \label{unifbound}
    For every $\rho \in \mathcal{R}(B,r)$ and $0< \eta < 1$, we have
    that the corresponding probability density $p_{\rho}$ satisfies
    \begin{eqnarray*}
        0& \leq& \int_0^\pi p_\rho (\cdot, \phi) \ast NN^\eta (x) d \phi
        \leq M(\eta),\\
        0 &\leq &\int_0^\pi p_\rho (x, \phi) d \phi\leq C
    \end{eqnarray*}
    for all $x \in \mathbb{R}$ eventually depending on $\phi$, where
    $M(\eta)>0$ is a constant depending only on fixed $\eta$ and $C>0$.
\end{lemma}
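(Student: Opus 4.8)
The plan is to handle the two displayed inequalities in reverse order, since the second line carries all the analytic content and the first follows from it. The lower bounds are immediate, because $p_\rho$ is a probability density and $NN^\eta$ a Gaussian density, so every integrand is nonnegative. For the upper bound in the first line I would write the convolution out and interchange the order of integration (Tonelli applies, everything being nonnegative):
\[
\int_0^\pi \paren{p_\rho(\cdot,\phi)\ast NN^\eta}(x)\,d\phi
= \int_{\R} NN^\eta(u)\paren{\int_0^\pi p_\rho(x-u,\phi)\,d\phi}\,du .
\]
Once the second inequality is proved with a constant $C$ that is uniform in its spatial argument, the inner integral is at most $C$ and, since $NN^\eta$ integrates to one, the whole expression is at most $C$. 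One then simply takes $M(\eta)=C$; note that the constant does not actually deteriorate with $\eta$, the normalization of $NN^\eta$ being irrelevant here.

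The core step is thus to bound $\int_0^\pi p_\rho(x,\phi)\,d\phi$ uniformly in $x\in\R$ and in $\rho\in\mathcal{R}(B,r)$. First I would build a radial majorant of the Wigner function that is uniform over the class: combining the universal bound $\abs{W_\rho}\le 1/\pi$ with the decay of Propositions~\ref{prop:popetown} and~\ref{prop:popetown2}, there is a function $G$ depending only on $B$ and $r$ with $\abs{W_\rho(q,p)}\le G(z)$ for $z=\sqrt{q^2+p^2}$, where $G(z)=1/\pi$ for $z\le z_0$ and $G(z)=A(z)e^{-\beta z^r}$ for $z>z_0$, with $A$ a polynomial of degree $4-r$. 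Using the definition (\ref{eq.Radon.transform}) of $\mathcal{R}$ together with the identity $(x\cos\phi-t\sin\phi)^2+(x\sin\phi+t\cos\phi)^2=x^2+t^2$, the argument of $W_\rho$ along the line of integration has modulus $\sqrt{x^2+t^2}$, so that
\[
\int_0^\pi p_\rho(x,\phi)\,d\phi \;\le\; \pi\int_{\R} G\paren{\sqrt{x^2+t^2}}\,dt ,
\]
and the problem is reduced to showing that the right-hand side is finite and bounded independently of $x$.

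The hard part will be precisely this uniformity in $x$, rather than mere finiteness for each fixed $x$. I would split the $t$-integral according to whether $\sqrt{x^2+t^2}\le z_0$ or $>z_0$. On the first set the integrand is at most $1/\pi$ while the set of admissible $t$ has length at most $2z_0$, giving a contribution bounded by $2z_0/\pi$. On the second set the naive estimate $G(\sqrt{x^2+t^2})\le G(\abs t)$ is not available, because $G$ need not be monotone near $z_0$; the trick is to absorb the polynomial into a weaker exponential by $z^{4-r}e^{-\frac{\beta}{2}z^r}\le C_0$, which yields $A(z)e^{-\beta z^r}\le C_1 e^{-\frac{\beta}{2}z^r}$ with a decreasing right-hand side. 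Then $\sqrt{x^2+t^2}\ge\abs t$ gives $C_1 e^{-\frac{\beta}{2}(x^2+t^2)^{r/2}}\le C_1 e^{-\frac{\beta}{2}\abs t^r}$, whose integral over $\R$ is a finite constant free of $x$. Collecting the two pieces produces $\int_0^\pi p_\rho(x,\phi)\,d\phi\le C$ with $C$ depending only on $B$ and $r$ (through $z_0$, $\beta$ and the coefficients of $A$), which is the second line of the statement; the first line then follows as in the first paragraph. The whole case $r=2$ is identical, using the value $\beta=B/(1+\sqrt{B})^2$ and the degree-$2$ polynomial $A$ from Proposition~\ref{prop:popetown2}.
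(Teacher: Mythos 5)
Your proof is correct, but it goes through physical space where the paper works in the Fourier domain. The paper's proof of the convolution bound uses the projection--slice identity $\mathcal{F}_1[p_\rho(\cdot,\phi)](t)=\widetilde W_\rho(t\cos\phi,t\sin\phi)$ together with the universal bound $|\widetilde W_\rho|\leq 1$ and the Gaussian factor $\widetilde{NN}^\eta(t)=e^{-t^2(1-\eta)/(4\eta)}$ coming from the noise; after the polar change of variables $\int_0^\pi\int dt\,d\phi=\int \|w\|^{-1}dw$ the Gaussian makes the integral converge, so the constant $M(\eta)$ depends \emph{only} on $\eta$ and the bound holds for \emph{every} quantum state, with no use of the class $\mathcal{R}(B,r)$. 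You instead deduce the convolution bound from the uniform bound on $\int_0^\pi p_\rho(x,\phi)\,d\phi$, which you prove by majorizing $|W_\rho|$ radially via Propositions~\ref{prop:popetown} and~\ref{prop:popetown2} and integrating along the Radon line using $\sqrt{x^2+t^2}\geq |t|$; the absorption of the polynomial factor into $e^{-\beta z^r/2}$ to restore monotonicity is the right fix, and the Tonelli reduction of the first line to the second is sound. The trade-off is that your constant depends on $B$ and $r$ rather than on $\eta$ alone (harmless for the rate computations, though slightly weaker than the statement's ``depending only on $\eta$''), while your argument has the merit of actually establishing the second displayed inequality, which the paper's displayed chain only treats implicitly (it would follow from the same Fourier argument with the decay (\ref{eq:reg}) of $\widetilde W_\rho$ over the class replacing the Gaussian damping).
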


\begin{proof}
    Indeed, using inverse Fourier transform and the fact that
    $\abs{\widetilde{W}_{\rho}(w)}\leq 1$ we get:
    \begin{eqnarray*}
       && \abs{\int_0^\pi p_\rho (\cdot, \phi) \ast NN^\eta (x) d \phi}\\
        & \leq & \abs{\int_0^\pi\frac{1}{2 \pi}\int e^{-it x}
            \mathcal{F}_1[p_\rho (\cdot,\phi)](t) \cdot \widetilde{NN}^\eta (t)dt
            d \phi}\\
        & \leq & c(\eta) \int_0^\pi \int \abs{\widetilde W _\rho (t
            \cos\phi, t \sin\phi)}
        \exp \left( - \frac{t^2 (1 -\eta)}{4 \eta}\right) dt d\phi \\
        & \leq & c(\eta) \int \frac{1}{\|w\|} \abs{\widetilde W _\rho
            (w)}
        \exp \left(-\frac{\|w\|^2(1-\eta)}{4 \eta} \right) dw \leq M(\eta),
    \end{eqnarray*}
    where $c(\eta), \, M(\eta)$ are positive constants depending only
    on $\eta \in (0,1)$. \hfill
\end{proof}
\bibliographystyle{plain}
\bibliography{Wignerbib}
\end{document}